\documentclass{aims}
\usepackage{amsmath}
\usepackage{paralist}
\usepackage{graphics} 
\usepackage{epsfig} 
\usepackage{graphicx}  
\usepackage{epstopdf}
 \usepackage[colorlinks=true]{hyperref}
\hypersetup{urlcolor=blue, citecolor=red}

  \textheight=8.2 true in
   \textwidth=5.0 true in
    \topmargin 30pt
     \setcounter{page}{1}


\usepackage{comment}
\newtheorem{theorem}{Theorem}[section]

\newtheorem{proposition}{Proposition}

\theoremstyle{definition}

\newcommand{\ip}{_{i+\frac{1}{2}}}

\newcommand{\N}{\mathbb{N}}

\newcommand {\vp} {\varphi}

\newcommand {\Chi} {{\bf \raise 2pt \hbox{$\chi$}} }
\newcommand {\dt}   {\Delta t}
\newcommand {\dx}   {\Delta x}

\newcommand {\f}   {\frac}
\newcommand {\p}   {\partial}

\newcommand {\ga}   {\left}
\newcommand {\dr}   {\right}

\newcommand{\bea} {\begin{array}{rl}}
\newcommand{\eea} {\end{array}}
\newcommand{\bepa}{\left\{ \begin{array}{l}}
\newcommand{\eepa} {\end{array}\right.}

\usepackage{cleveref}
\usepackage{subcaption}

\title[Energy and implicit discretization of FP and KS equations] 
      {Energy and implicit discretization of the Fokker-Planck and Keller-Segel type equations}

\author[L. Almeida F. Bubba B. Perthame and C. Pouchol]{}

\subjclass{Primary: 35K55, 35Q84, 65M08, 65M22, 92C17.}
\keywords{Energy dissipation, Numerical methods, Keller-Segel system, Scharfetter-Gummel method, Gradient flow, Upwind schemes, Finite volume method, Mathematical biology.}

 \email{almeida@ljll.math.upmc.fr}
 \email{federica.bubba@upmc.fr}
 \email{benoit.perthame@upmc.fr}
 \email{pouchol.camille@gmail.com}


\thanks{$^*$ Corresponding author: Federica Bubba}

\begin{document}
\maketitle

\centerline{\scshape Luis Almeida Federica Bubba$^*$ Beno\^it Perthame and Camille Pouchol}
\medskip
{\footnotesize
 \centerline{Sorbonne Universit\'{e}, CNRS, Universit\'{e} Paris-Diderot SPC,}
   \centerline{Inria, Laboratoire Jacques-Louis Lions,}
   \centerline{4, pl. Jussieu 75005, Paris, France}
} 

\bigskip


\begin{abstract}
The parabolic-elliptic Keller-Segel equation with sensitivity saturation, because of its pattern formation ability, is a challenge for numerical simulations. We provide two finite-volume schemes that are shown to preserve, at the discrete level, the fundamental properties of the solutions, namely energy dissipation, steady states, positivity and conservation of total mass. These requirements happen to be critical when it comes to distinguishing between discrete steady states, Turing unstable transient states, numerical artifacts or approximate steady states as obtained by a simple upwind approach.
\par
These schemes are obtained either by following closely the gradient flow structure or by a proper exponential rewriting inspired by the Scharfetter-Gummel discretization. An interesting fact is that upwind is also necessary for all the expected properties to be preserved at the semi-discrete level. 
These schemes are extended to the fully discrete level and this leads us to tune precisely the terms according to explicit or implicit discretizations. 
Using some appropriate monotonicity properties (reminiscent of the maximum principle), we prove well-posedness for the scheme as well as all the other requirements. Numerical implementations and simulations illustrate the respective advantages of the three methods we compare.
\end{abstract}

\section{Introduction}
\label{sec:intro}

Taxis-diffusion and aggregation equations are widely studied in the context of biological populations (see~\cite{Carrillo2018, Hillen2009, Hillen2007, Murray2002} for instance). They describe cell communities which react to external stimuli and form aggregates of organisms (pattern formation), such as bacterial colonies, slime mold or cancer cells. The Patlak-Keller-Segel model~\cite{KellerSegel1970} is the most famous system and we are interested in the following generalization
\begin{align}\label{eq:u}
\begin{cases}
\f{\p u}{\p t} - \f{\p}{\p x}\left[ \f{\p u }{\p x} - \vp(u) \f{\p v}{\p x}\right] = 0, \qquad &x\in (0,1), \, t > 0,
\\[1mm]
\f{\p u }{\p x} - \vp(u)  \f{\p v}{\p x} =0, \qquad &\text{for $x=0$ or $1$},
\\[1mm]
u(x,0)=u^0(x) \geq 0,\qquad &x\in [0,1].
\end{cases}
\end{align}
Here, $u(x,t) \geq 0$ represents the density of a given quantity (e.g. cells or bacteria population) and the initial data $u^0(x)$ is a given nonnegative smooth function. As for the function $v$, which models a molecular concentration, we choose either the case of the Fokker-Planck (FP in short) equation, where $v(x)$ is known
\begin{equation}\label{eq:FP}
v:= v(x) \geq 0,   \qquad  \f{\p v}{\p x} \in L^\infty(0,1),
\end{equation}
or the case of the generalized Keller-Segel (GKS in short) equation, where
\begin{equation}\label{eq:KS}
v(x,t)=\int K(x,y) u(y,t) dy,  \qquad \hbox{$K(x,y)$ a smooth, symmetric kernel}.
\end{equation}

Depending on the modeling choice for $\varphi(u)$, solutions to~\ref{eq:u} can blow up in finite time depending upon a critical mass (see~\cite{Blanchet2006, Nagai1995}) or reach stationary profiles in the form of peaks or plateaus~\cite{Potapov2005} (pattern formation by Turing instability). 
The high nonlinearities due to the advection term make problem~\ref{eq:u} mainly untractable through analytical methods. Thus, it is important to provide reliable numerical methods avoiding non-physical oscillations and numerical instabilities even when dealing with non-smooth solutions. The main properties that one wishes to preserve in a numerical method are
\begin{itemize}
	\item[(P1)] {\em positivity property}, since we are dealing with densities or concentrations,
	\begin{equation}
	\label{prop1}
	u(x,t) \geq 0,
	\end{equation}
	\item[(P2)] {\em mass conservation}, because no-flux boundary conditions are imposed,
	\begin{equation}\label{prop2}
	\int_0^1 u(x,t) = \int_0^1 u^0(x) dx,
	\end{equation}
	\item[(P3)] {\em preservation of discretized steady states} of the form
	\begin{equation}\label{prop3}
	g(u)= \mu + v, \qquad g'(u) = \f{1}{\vp(u)},
	\end{equation}
	where $\mu$ is a parameter related to the mass of $u$, and
	\item[(P4)] {\em energy dissipation} 
	\begin{equation}\label{prop4}
	\f{d}{dt} {\mathcal E}(t) \leq 0, \qquad \quad {\mathcal E}(t)=  \int_0^1 \left[G(u) - \kappa u v \right] dx,
	\end{equation}
\end{itemize}
where $G(u)$ is a primitive of $g(u)$ and the value of $\kappa$ differs for the two cases we study here, namely 
\begin{equation}\label{energy}
\kappa = 1 \quad \text{(FP case)}, \qquad \quad   \kappa = \f 12 \quad \text{(GKS case)}.
\end{equation}

The aims of our work are first to recall two points of view for the derivation of the above energy inequality, second to use them for the construction of conservative, finite volume numerical schemes preserving energy dissipation to solve equation~\ref{eq:u}, third to make numerical comparisons in the case of complex patterns in order to distinguish physical instabilities from numerical artifacts. 
The two different derivations of the energy dissipation use two symmetrization strategies: the gradient flow or the Scharfetter-Gummel approach.  
It turns out that they lead to two strategies for discretization of problem~\ref{eq:u}. We prove that the proposed schemes statisfy properties~\ref{prop1}--\ref{prop4} and because we build implicit schemes, there is no limitation on the time step in the fully discrete case. 

There exist other works which propose schemes for the resolution of problems in the form~\ref{eq:u}. For instance, finite elements methods are used, see~\cite{Saito2005} and references therein. Optimal transportation schemes for Keller-Segel systems are introduced in \cite{BlanchetCarrilloCalvez2008}. The papers~\cite{Carrillo2015} and~\cite{Carrillo2017} propose a finite-volume method able to preserve the above properties, including energy dissipation, at the semi-discrete level or with an explicit in time discretization, using the gradient flow approach, see also~\cite{Filbet2012}. The symmetrization using the Scharfetter-Gummel approach is used in \cite{LiuWangZhou2018} where properties similar to ours are proved. However, the results do not include sensitivity saturation.
To the best of our knowledge, our work is the first to propose implicit in time methods, without time step limitation (CFL condition), for which we are able to prove that, under generic conditions, the energy decreases at both semi-discrete and discrete level. Moreover, we build an alternative to the gradient flow approach applying the Scharfetter-Gummel strategy~\cite{Scharfetter1969} for the discretization of drift-diffusion equations~\ref{eq:u} with a general saturation function~$\varphi$.
\\

The paper is organized as follows.
In~\Cref{sec:model}, we present in more details our assumptions for the equation~\ref{eq:u}. We also explain some modeling choices in particular for the nonlinearity $\vp(u)$ and on the choice of the kernel $K$.
Section \ref{sec:energy} is devoted to the introduction of the two approaches, gradient flow or Scharfetter-Gummel, and to how we use the continuous version of energy dissipation to derive the schemes.
In~\Cref{sec:semidiscret} and~\Cref{sec:discret}, we show how the aforementioned two approaches lead to two different numerical methods, developed from the semi-discrete (only in space discretization) level to the fully discretized scheme. In particular, using a general result recalled in \Cref{sec:existunique} about monotone schemes, we  prove that the proposed schemes are well-posed and satisfy the fundamental properties~\ref{prop1}--\ref{prop4}. These theoretical results are illustrated in Section~\ref{sec:numericalsimulations} by numerical simulations: we compare the gradient flow and the Scharfetter-Gummel schemes with the upwind approach, typically used to solve this kind of models.

\section{Modeling assumptions}
\label{sec:model}
The standard biological interpretation of~\ref{eq:u} (\cite{Hillen2002, Murray2002, Perthame2000}) provides us with some further properties of the nonlinearities which we describe now.

\subsection*{Chemotactic sensitivity.}
The function $\vp(u)$ is called chemotactic sensitivity. It determines how the random movement of particles of density $u$ is biased in the direction of the gradient of $v$. In order to include the different choices of $\vp$, as $\vp(u) = u$ as in the Keller-Segel or drift-diffusion model, or the logistic case $\vp(u)=u(1-u)$, or the generalized case $\vp(u) = u e^{-u}$, we use the formalism
\begin{equation}\label{LogisticSensitivity}
\vp(u) = u \psi(u), \qquad \text{with} \qquad \psi(u)\geq 0, \qquad  \psi'(u)\leq 0.
\end{equation}
More precisely, we consider two cases for the smooth function $\psi$,  
\begin{equation}\label{AlwaysPositive}
\psi(u)>0, \qquad \forall u>0,
\end{equation}
or
\begin{equation}\label{Logistic}
\psi(u)>0 \quad \text{for} \quad  0<u<M, \qquad \psi(M)=0.
\end{equation}
In the case \ref{Logistic} we only consider solutions which satisfy $u\in [0,M]$.

It is convenient to introduce the notations
\begin{equation}\label{gG}
g(u) =\int_a^u \f{1}{\vp(v)} dv,  \qquad  G(u) =\int_0^u g(s) \, ds,
\end{equation}
where $a$ is a constant chosen so that $g$ is well-defined (depending on $\varphi$).
For instance, for the standard case $\vp(u)=u$, $a=1$ and one obtains $g(u)= \ln (u)$ and $G(u)=u\ln(u) -u$. For functions $\varphi$ satisfying \ref{AlwaysPositive}, a natural hypothesis which is related to blow-up is the following
\begin{equation}\label{BlowUp}
\frac{1}{\varphi} \notin L^1(a,+\infty), \qquad g(u)  \underset{ u \to \infty }{\longrightarrow} + \infty,
\end{equation}
an assumption which, as we see it later, appears naturally when it comes to the well-posedness of numerical schemes.

Note that under assumption~\ref{AlwaysPositive} and if $\psi$ is bounded, solutions exist globally and are uniformly bounded~\cite{Cieslak2007}. Under assumption~\ref{Logistic}, if $0\leq u^0 \leq M$, the solution is also globally defined and satisfies $0\leq u(t,\cdot) \leq M$ for all times~\cite{Hillen2007}.

\subsection*{Expression of the drift $v$.} The convolution expression for $v$ as a function of $u$ has been widely used in recent studies~\cite{Armstrong2006, Barre2018, Bertozzi2009}. It also comes from the Keller-Segel model~\cite{Hillen2009, KellerSegel1970, Perthame2000}, where the equation for the cells density in~\ref{eq:u} is complemented with a parabolic equation for the chemoattractant concentration $v$. Since the chemoattractant is supposed to diffuse much quicker than the cells density, we can consider a simplified form of the Keller-Segel system and couple~\ref{eq:u} with the elliptic equation for $v$
\begin{align*}
\begin{cases}
- \f{\p^2 v }{\p x^2} = u - v, &\qquad x \in (0,1), \\[1mm]
\f{\p v}{\p x} = 0, &\qquad x = 0 \text{ or } 1.
\end{cases}
\end{align*}
This equation leads to~\ref{eq:KS} using the Green function given by the positive and symmetric kernel $K(x,y)$ defined as
\begin{equation}
K(x,y) = \lambda \ga(e^x + e^{-x}\dr)  \left(e^y + e^{2-y} \right)\label{Kernel}, \quad x \leq y, \qquad \quad  \lambda = \f{1}{2\left(e^2-1\right)}.
\end{equation}

\section{Energy dissipation}
\label{sec:energy}

Energy dissipation is the most difficult property to preserve in a discretization and methods might require corrections \cite{JinYan2011}. Therefore, it is useful to recall how it can be derived simply for the continuous equation. We focus on two different strategies, that lead to two different discretization approaches, the gradient flow approach and the Scharfetter-Gummel approach.

\subsection{The gradient flow approach to energy.} Using the notations~\ref{gG}, the equation for $u$ can be rewritten as
\begin{equation}\label{gf}
\f{\p u}{\p t} - \f{\p}{\p x}\left[ \vp(u) \f{\p \left(g(u) - v \right)}{\p x} \right] = 0,
\end{equation}
so that
\begin{equation*}
\begin{split}
\left(g(u) -v\right) \f{\p u}{\p t}  =& \ga(g (u) -v\dr) \f{\p}{\p x}\left[ \vp(u) \f{\p \left(g(u) -v \right) }{\p x} \right] \\ 
=& \f 12 \f{\p}{\p x}\left[\vp(u) \f{\p (g (u )-v)^2}{\p x} \right] - \vp(u) \left[ \f{\p (g (u )-v)}{\p x}\right]^2.
\end{split}
\end{equation*}
Consequently, we find, in the Fokker-Planck case
\begin{align*}
\f{d}{dt} \int_0^1 [G(u) -uv(x)]dx = - \int_0^1\vp(u)  \ga[ \f{\p ( g(u )-v)}{\p x}\dr]^2 dx \leq 0,
\end{align*}
and in the generalized Keller-Segel case 
\begin{align*}
\f{d}{dt} \int_0^1 [G(u) - \f 12 uv(x,t)]dx = - \int_0^1 \vp(u)  \ga[ \f{\p (g (u)-v)}{\p x}\dr]^2 dx \leq 0,
\end{align*}
because, thanks to the symmetry assumption on $K$ and by using the definition~\ref{eq:KS} of $v$, we have
\begin{equation*}
\begin{split}
\int_0^1 \int_0^1  K(x,y) u(y,t) \f{\p u(x,t)}{\p t} =&  \int_0^1 \int_0^1  K(x,y)  \f{\p u(y,t)}{\p t}u(x,t) \\
=& \f 12  \f{d}{d t} \int_0^1 \int_0^1 K(x,y) u(y,t) u(x,t).
\end{split}
\end{equation*}

\subsection{The Scharfetter-Gummel approach to energy.} Inspired from the case of electric forces in semi-conductors, the 
equation for $u$ can be rewritten as
\begin{equation}\label{sg}
\f{\p u}{\p t}  - \f{\p}{\p x}\ga[ e^{v-g(u)}  \vp(u) \f{\p e^{g(u)-v}}{\p x} \dr] = 0,
\end{equation}
so that
\begin{equation*}
\begin{split}
(g(u)-v) \f{\p u}{\p t} =& (g(u)-v) \f{\p}{\p x}\left[ e^{v-g(u)}  \vp(u) \f{\p e^{g(u)-v}}{\p x} \right]\\
=& \f{\p}{\p x} \left[  (g(u)-v)e^{v-g(u)}  \vp(u) \f{\p e^{g(u)-v}}{\p x} \right]\\
&  - e^{v-g(u)} \vp(u) \f{\p e^{g(u)-v}}{\p x} \f{\p \left(g(u)-v\right)}{\p x}.
\end{split}
\end{equation*}
It is immediate to see that the last term has the negative sign while the time derivative term is exactly the same as in the gradient flow approach.
\\

At the continuous level, these two calculations are very close to each other. However, they lead to the construction of different discretizations. The gradient flow point of view is used for numerical schemes by~\cite{Carrillo2018}, the Scharfetter-Gummel approach is used in~\cite{LiuWangZhou2018}.

\section{Semi-discretization}
\label{sec:semidiscret}

We give here our notations for the semi-discretization. We consider a (small) space discretization $\dx= \frac{1}{I}$, $I \in \N$. The mesh is centered at  $x_i= i \dx$ for $i=1, \ldots, I$, with endpoints $x_{i+1/2} = (i+1/2) \dx$ for $i=1,\ldots, I$. Therefore, our computational domain is always shifted and takes the form $\big(\f{\dx}{2},(I+\f 12) \dx \big)$. Finally, the mesh is formed by the intervals
\begin{align*}
I_i= \left(x_{i-\f12}, x_{i+\f12} \right), \qquad  i=1, \ldots, I.
\end{align*}
The semi-discrete approximation of $u(x,t)$ at a given time $t$, interpreted in the finite volume sense (\cite{Bouchut2004, Eymard2000, LeVeque2002}), is denoted by
\begin{align*}
u_i(t) \approx  \f{1}{\dx}\int_{I_i} u(x, t) dx, \qquad  i=1, \ldots, I.
\end{align*}
As for the discretization of $v$ for $i=1, \ldots, I$, $v_i(t)$ stands for $v(x_i)$ in the FP case, while it is defined by $\sum_{j=1}^I K_{ij} u_j(t)$ in the GKS case, where \[K_{ij} := K(x_i, x_j), \qquad i=1, \ldots, I, \; j=1 \ldots, I.\]

Integration on the interval $I_i$ yields fluxes $F_{i+\f 12}(t)$ which will approximate the quantity $\left(\f{\p u }{\p x} - \vp(u) \f{\p v}{\p x}\right)$ at $x_{i+\f12}$ for $i=0, \ldots, I$ through the interval interfaces. The boundary conditions require $F_{\f 12}(t) = F_{I+\f 12}(t) = 0$, and the whole problem will be to properly define $F_{i+\f 12}(t)$ for $i=1, \ldots, I-1$. This will be chosen depending on the strategy, either through a gradient flow or a Scharfetter-Gummel approach, as well as the equation considered (FP or GKS).
\\

The mass conservative form of~\ref{eq:u} leads to a finite volume semi-discrete scheme 
\begin{equation}\label{eq:sd}
\bepa
\f{d u_i (t)}{dt} +\f 1 {\dx} [F_{i+1/2} (t) -F_{i-1/2}(t) ] =0, \qquad i=1,\dots,I, \quad t > 0,\\[1mm]
F_{1/2}(t) = F_{I+1/2}(t) = 0.
\eepa
\end{equation}

\noindent We use the definition~\ref{energy} for $\kappa$ and set for $i = 1, \dots, I$
\begin{align*}
E_i(t)=  G(u_i(t)) -  \kappa u_i(t) v_i(t).
\end{align*}

\noindent The semi-discrete energy is then
\begin{equation*}
{\mathcal E}_{\rm sd}(t) := \dx\sum_{i=1}^{I}  E_i (t).
\end{equation*}

\subsection{The gradient flow approach}
\label{sec:sdgl}

Using the form~\ref{gf} of equation~\ref{eq:u}, we define the semi-discrete flux as
\begin{equation}\label{sd:GF}
F_{i+1/2}(t) = - \f{\vp_{i+1/2}}{\dx} \big[ g(u_{i+1}) - v_{i+1} -   \big( g(u_{i} )- v_{i} \big)  \big], \quad i=1, \ldots, I-1.
\end{equation}
The precise expression of $\vp_{i+1/2}$ is not relevant for our present purpose which is to preserve the energy dissipation property. However, for stability considerations it is useful to upwind, an issue which we shall tackle when we consider the full discretization.
\\
Then, the semi-discrete energy form is obtained after multiplication by $(g (u_i) -v_i)$ and yields
\begin{equation*}
\begin{split}
\f{d}{dt}  \dx\sum_{i=1}^I  E_i (t) =& - \sum_{i=1}^{I} (g(u_i) - v_i)  [F_{i+1/2} -F_{i-1/2} ]\\
=& \sum_{i=1}^{I-1} F_{i+1/2} \left[ \left( g(u_{i+1}) -v_{i+1} \right) - ( g(u_i) -v_i)  \right].
\end{split}
\end{equation*}
Therefore, we find the semi-discrete form of energy dissipation
\begin{equation*}
\f{d {\mathcal E}_{\rm sd} }{dt} = - \dx \sum_{i=1}^{I-1} \vp_{i+1/2} \left[ \f{ \big( g(u_{i+1}) - v_{i+1}\big) -   \big( g(u_{i}) - v_{i} \big) }{\dx} \right]^2 \leq 0.
\end{equation*}

\subsection{The Scharfetter-Gummel approach}
\label{sec:sdsg}

We choose to discretize the form~\ref{sg}, defining the semi-discrete flux as
\begin{equation}\label{sd:SG}
F_{i+1/2} (t) = - \f{\big( e^{v-g(u)}  \vp(u) \big)_{i+1/2} }{\dx} \big[  e^{g(u_{i+1}) - v_{i+1}} - e^{g(u_{i} ) - v_{i}}  \big], \quad i=1, \ldots, I-1,
\end{equation}
where, again, the specific form of the interpolant $\big( e^{v-g(u)}  \vp(u) \big)_{i+1/2}$ is not relevant here.

As above, the semi-discrete energy form follows upon multiplication by $g (u_i)-v_i $ and reads
\begin{equation*}
\begin{split}
\f{d}{dt}  \dx\sum_{i=1}^{I} E_i (t) =& - \sum_{i=1}^{I} \left(g (u_i )- v_i \right)  \left[F_{i+1/2} -F_{i-1/2} \right]\\
=& \sum_{i=1}^{I-1} F_{i+1/2} \left[ \left(g(u_{i+1} ) - v_{i+1} \right) - \left( g(u_i) -v_i \right) \right].
\end{split}
\end{equation*}
Summing up, the semi-discrete form of energy dissipation here writes
\begin{equation*}
\begin{split}
\f{d {\mathcal E}_{\rm sd} }{dt} =& - \dx \sum_{i=1}^{I-1} \Big{\{} \left( e^{v-g(u)}  \vp(u) \right)_{i+1/2} \cdot \f{ e^{g(u_{i+1}) - v_{i+1}} - e^{g(u_{i} ) - v_{i}} }{\dx} \cdot\\
& \qquad \qquad \qquad \f{ \left( g(u_{i+1} ) - v_{i+1} \right) - \left( g(u_i) -v_i \right)  }{\dx} \; \Big{\}} \leq 0.
\end{split}
\end{equation*}

\subsection{Discrete steady states}
\label{sec:dss}

Steady states make the energy dissipation vanish which imposes both in the gradient flow and the Scharfetter-Gummel approaches that
$\big( g(u_{i+1}) - v_{i+1}\big) =  \big( g(u_{i}) - v_{i} \big) $. In other words they are given, up to a constant $\mu$, as the discrete version of~\ref{prop3}, 
\begin{equation}\label{eq:discretestst}
g(u_{i}) = v_{i} + \mu, \quad  i=1,\dots,I.
\end{equation}
We recall from~\cite{Potapov2005} that in the GKS case, there are several steady states and the constant ones can be unstable.

\section{Fully discrete schemes}
\label{sec:discret}

For the time discretization, we consider (small) time steps $\dt>0$, and set $t^n =n \dt$ for $n \in \mathbb{N}$. The discrete approximation of $u(x,t)$ is now
\begin{equation*}
u^n_i \approx  \f{1}{\dx} \int_{I_i} u(x, t^n) dx, \qquad  i=1, \ldots, I, \; n \in \mathbb{N}.
\end{equation*}
Integration on the interval $I_i$ yields fluxes $F_{i+\f 12}^n$ which will approximate the quantity $\left(\f{\p u }{\p x} - \vp(u) \f{\p v}{\p x}\right)$ at $x_{i+\f12}$ for $i=1, \ldots, I-1$ through the interval interfaces, and at time $t^n$ for $n\in \mathbb{N}$. The boundary conditions require $F_{\f 12}^n = F_{I+\f 12}^n = 0$ for all $n \in \mathbb{N}$.

To achieve the time discretization, and restricting our analysis to the Euler scheme, we write the time discretization $\f{du_i(t)}{dt}$ as $ \f{u_i^{n+1} - u_i^{n}}{\dt}$. Therefore, the full discretization of~\ref{eq:sd} reads for $n \in \mathbb{N}$ as
\begin{equation}\label{eq:fd}
\bepa
u_i^{n+1} - u_i^{n} + \f{\dt}{\dx} \left[ F_{i+1/2}^{n+1} - F_{i-1/2}^{n+1} \right]  =0, \qquad i = 1, \dots, I,\\[1mm]
F_{1/2}^{n+1} = F_{I+1/2}^{n+1} = 0.
\eepa
\end{equation}

The issue here is to decide which terms (in $u$ and $v$) should be discretized with implicit or explicit schemes based on fully discrete energy dissipation. We claim that, apart from the interpolant, we need to make the terms in $u_i$ implicit and, for the GKS case, the terms in $v_i$ explicit, a fact on which we now elaborate. 

We define the energy at the discrete level through  
\begin{align*}
E_i^n=  G(u_i^n) -  \kappa u_i^n v_i^n, \qquad i=1,\dots,I, \; n \in \mathbb{N},
\end{align*}
and
\begin{equation*}
{\mathcal E}^n := \dx\sum_{i=1}^{I}  E_i^n, \qquad n \in \mathbb{N}.
\end{equation*}

The computation made in the semi-discrete case, $\f{dE_i (t) }{dt} =  \f{du_i (t) }{dt} (g(u_i(t)) - v_i(t))$, extends to the fully discrete setting and leads to the following constraint on the energy
\begin{equation*}
\sum_{i=1}^I \left(E_i^{n+1} -  E_i^{n}\right)  \leq \sum_{i=1}^I (u_i^{n+1} - u_i^{n}) (g( u_i^{\alpha_n}) - v_i^{\beta_n}).
\end{equation*}
Here, $u_i^{\alpha_n} := \alpha u_i^{n} + (1-\alpha) u_i^{n+1}$ and $v_i^{\beta_n} := \beta v_i^{n} + (1-\beta) v_i^{n+1}$.  
\noindent
The convexity of $G(\cdot)$ motivates the choice of an implicit discretization for $u$, i.e. $\alpha = 0$, because for $i=1, \ldots, I$, there holds
\begin{equation*}
G(u_i^{n+1}) -G(u_i^n) \leq g(u_i^{n+1})(u_i^{n+1} - u_i^{n}).
\end{equation*}
\noindent
Regarding the term in $uv$, only the case of GKS needs to be fixed and we thus require
\begin{align*}
- \sum_{i=1}^I \left[u_i^{n+1} v_i^{n+1} - u_i^n v_i^{n}\right] \leq -2 \sum_{i=1}^I v_i^{\beta_n} (u_i^{n+1} - u_i^{n}).
\end{align*}
It is natural to try and balance the terms by choosing a semi-explicit discretization with $\beta = \frac{1}{2}$, which yields
\begin{equation*}
\begin{split}
\sum_{i=1}^I 2 v_i^{\beta_n} (u_i^{n+1} - u_i^{n}) - \left(u_{i}^{n+1} v_i^{n+1} - u_{i}^{n} v_i^{n}\right) =& \sum_{i=1}^I \left(u_i^{n+1}v_i^n - u_i^n v_i^{n+1}\right)\\
=& \sum_{i,j} K_{ij} \left(u_i^{n+1}u_j^n - u_i^n u_j^{n+1}\right),
\end{split}
\end{equation*}
with the last term vanishing due to the symmetry of $K$.

However, implicit and explicit time discretizations for $v$ can also be considered at the expense of adding hypotheses on the kernel $K$. Indeed, for a given $0 \leq \beta \leq 1$, we find
\begin{equation*}
\sum_{i=1}^I 2 v_i^{\beta_n} (u_i^{n+1} - u_i^{n}) - \left(u_{i}^{n+1} v_i^{n+1} - u_{i}^{n} v_i^{n}\right) = (1-2\beta) \sum_{i,j} K_{ij} (u_{i}^{n+1} - u_{i}^n)(u_{j}^{n+1} - u_{j}^n).
\end{equation*}

As a consequence, an explicit (resp. implicit) scheme is suitable for the time discretization of $v$ provided that $K$ is a non-negative (resp. non-positive) symmetric kernel. Since $K$ is a non-negative symmetric kernel for the Generalized Keller-Segel equation~\ref{eq:KS}, for simplicity we choose an explicit discretization for $v$. 

Finally, we note that the interpolant does not play any role for energy discretization and we can use the simplest explicit or implicit discretization (both in $u$ and $v$), so as  to make the analysis of the scheme as simple as possible.

\subsection{The gradient flow approach}
\label{sec:dgl}

We consider the full discretization of~\ref{sd:GF} and define the fully discrete flux in~\ref{eq:fd} as
\begin{equation}\label{disc:gsflux}
F_{i+1/2}^{n+1} =- \f{\vp(u)_{i+1/2}^{n+1}}{\dx}  \left[  \left(g(u_{i+1}^{n+1}) - v_{i+1}^{n}\right)  -\left( g(u_{i}^{n+1}) - v_{i}^{n}\right) \right], \; \; i = 1, \dots, I-1.
\end{equation}

At this level, we need to define the form of the interpolant $\vp(u)_{i+1/2}^{n+1}$. From the theorem in~\Cref{sec:existunique}, we use an upwind technique in order to ensure well-posedness and monotonicity properties of the scheme. Thus, for $i=1,\dots,I-1$, we define
\begin{equation}\label{disc:gfupwind}
\vp(u)_{i+1/2}^{n+1} := 
\begin{cases}
u_{i}^{n+1} \psi(u_{i+1}^{n+1})  \quad \text{when} \quad g(u_i^{n+1}) - g(u_{i+1}^{n+1})+ v_{i+1}^{n} -  v_{i}^{n} \geq 0,\\[1mm]
u_{i+1}^{n+1} \psi(u_{i}^{n+1}) \quad \text{when} \quad   g(u_i^{n+1}) - g(u_{i+1}^{n+1}) + v_{i+1}^{n} -  v_{i}^{n} < 0.
\end{cases}
\end{equation}

\begin{proposition}[Fully discrete gradient flow scheme] We assume either~\ref{AlwaysPositive} and~\ref{BlowUp}, or~\ref{Logistic} and give the $u_i^0 \geq 0$. Then, the scheme~\ref{eq:fd}--\ref{disc:gsflux}--\ref{disc:gfupwind} has the following properties:
	\\[1mm]
	{\bf (i)} \, the solution $u_i^{n}$ exists and is unique, for all $i = 1, \dots, I$, and $n\geq 1$;
	\\[1mm]
	{\bf (ii)}\,  it satisfies  $u_i^{n} \geq 0$,  and  $u^n_i \leq M$ for  the case \ref{Logistic}, if it is initially true;
	\\[1mm]
	{\bf (iii)} the steady states $g(u_i) -v_i = \mu $ are preserved;
	\\[1mm]
	{\bf (iv)} the discrete energy dissipation inequality is satisfied
	\begin{equation*}
	{\mathcal E}^{n+1} -  {\mathcal E}^{n} \leq - \f{\dt}{\dx}  \sum_{i=1}^{I-1}  \vp(u)_{i+1/2}^{n}  \left[ \left(g(u_{i+1}^{n+1}) - v_{i+1}^{n}\right)  -\left(  g(u_{i}^{n+1}) - v_{i}^{n}\right) \right]^2.
	\end{equation*}
\end{proposition}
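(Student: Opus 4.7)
The plan is to treat the four claims in the order (i), (ii), (iii), (iv), relying on the general monotone-scheme framework recalled in \Cref{sec:existunique} for the first two and on the continuous-level computations already performed in \Cref{sec:energy} and \Cref{sec:sdgl} for the last two.

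For (i) and (ii), I would rewrite the scheme~\ref{eq:fd}--\ref{disc:gsflux}--\ref{disc:gfupwind} as an implicit fixed-point equation $H_i(u^{n+1}; u^n, v^n) = 0$. The upwind choice~\ref{disc:gfupwind} is engineered so that each flux $F_{i+1/2}^{n+1}$ is (with the correct signs) monotone in $u_i^{n+1}$ and $u_{i+1}^{n+1}$, which is exactly the hypothesis of the monotonicity/M-matrix theorem in \Cref{sec:existunique}. Applying that result yields existence and uniqueness of $u^{n+1}$, as well as preservation of the lower barrier $u \equiv 0$ (because the flux from an empty upwind cell vanishes) and, under~\ref{Logistic}, of the upper barrier $u \equiv M$ (because $\psi(M)=0$ switches off the outgoing flux from a saturated cell). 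The growth condition~\ref{BlowUp}, $g(u) \to +\infty$ as $u \to +\infty$, is what ensures in case~\ref{AlwaysPositive} that the fixed-point map is coercive so that $u_i^{n+1}$ cannot escape to $+\infty$.

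For (iii), plugging a discrete steady state $g(u_i) - v_i = \mu$ into the bracket in~\ref{disc:gsflux} immediately forces $F_{i+1/2}^{n+1} = 0$ for every $i$, hence $u_i^{n+1} = u_i^n$.

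The substantive step is (iv). I would multiply~\ref{eq:fd} by $(g(u_i^{n+1}) - v_i^n)$ and sum in $i$. Using convexity of $G$ (as in the computation preceding the statement), the left-hand side satisfies
\begin{equation*}
\sum_{i=1}^I (u_i^{n+1}-u_i^n)\bigl(g(u_i^{n+1}) - v_i^n\bigr) \geq \sum_{i=1}^I \bigl(G(u_i^{n+1}) - G(u_i^n)\bigr) - \sum_{i=1}^I v_i^n (u_i^{n+1} - u_i^n).
\end{equation*}
After discrete integration by parts (the boundary terms vanish by $F_{1/2}^{n+1} = F_{I+1/2}^{n+1} = 0$), the right-hand side produces
\begin{equation*}
-\frac{\dt}{\dx}\sum_{i=1}^{I-1} \vp(u)_{i+1/2}^{n+1}\bigl[(g(u_{i+1}^{n+1}) - v_{i+1}^n) - (g(u_i^{n+1}) - v_i^n)\bigr]^2,
\end{equation*}
which is nonpositive since $\vp(u)_{i+1/2}^{n+1} \geq 0$ by (ii) and the assumption $\psi\geq 0$. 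It remains to match the $v$ piece with $-\kappa u v$ in $\mathcal{E}$: in the FP case $v$ is time-independent so $-\sum v_i(u_i^{n+1}-u_i^n) = -\sum (u_i^{n+1}v_i - u_i^n v_i)$ directly, while in the GKS case with $\kappa=\tfrac12$ and $\beta=\tfrac12$ explicit discretization of $v$, the missing contribution is exactly $\tfrac12 \sum_i (u_i^{n+1} v_i^n - u_i^n v_i^{n+1}) = \tfrac12 \sum_{i,j} K_{ij}(u_i^{n+1}u_j^n - u_i^n u_j^{n+1})$, which vanishes by symmetry of $K_{ij}$, as in~\Cref{sec:energy}.

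The main obstacle I expect is (i): verifying in detail that the implicit nonlinear system does satisfy the hypotheses of the monotone-scheme theorem, in particular establishing coercivity from~\ref{BlowUp} so that a solution exists in the unbounded case~\ref{AlwaysPositive}. The other points are essentially a faithful transcription of the semi-discrete derivation, with convexity of $G$ and symmetry of $K$ doing the heavy lifting in (iv), and a minor notational check of whether the $\vp(u)_{i+1/2}$ in the stated inequality should be read at time $n+1$ consistently with~\ref{disc:gsflux}.
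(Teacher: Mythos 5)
Your overall strategy --- (i)--(ii) via the monotone-scheme theorem of \Cref{sec:existunique}, (iii) by direct inspection of the flux, (iv) by combining the convexity of $G$ with the symmetry of $K$ and the semi-discrete space computation --- is exactly the paper's, and your treatment of (iii) and (iv) is correct, including your observation that the interpolant in the stated dissipation inequality should carry the superscript $n+1$ to be consistent with~\ref{disc:gsflux}.

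The genuine gap is in (i) for the case~\ref{AlwaysPositive}--\ref{BlowUp}. The theorem in \Cref{sec:existunique} is not a coercivity result: it requires an explicit supersolution $(\bar U_i)$, and invoking ``coercivity of the fixed-point map'' does not produce one. The paper's key device is to take $\bar U_i^{n+1} = g^{-1}(C + v_i^n)$, which annihilates every flux in~\ref{disc:gsflux} because the bracket becomes $C - C = 0$, and which is a supersolution as soon as $g^{-1}(C + v_i^n) \geq u_i^n$; assumption~\ref{BlowUp}, i.e. $g(u) \to +\infty$ as $u \to +\infty$, is precisely what guarantees that such a finite $C$ exists. This construction is the one non-routine step of the existence proof and is absent from your argument. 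Relatedly, the monotonicity of the scheme is asserted (``engineered so that'') rather than verified: checking $\partial_1 A_{i+1/2} \geq 0$ and $\partial_2 A_{i+1/2} \leq 0$ uses all three sign conditions $\psi \geq 0$, $\psi' \leq 0$ and $g' = 1/\varphi > 0$, and it is the upwind switch in~\ref{disc:gfupwind} on the sign of $g(u_i^{n+1}) - g(u_{i+1}^{n+1}) + v_{i+1}^n - v_i^n$ (not merely of $v_{i+1}^n - v_i^n$) that makes each of the three resulting terms have the right sign; this computation should be written out as in the paper.
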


Notice that this theorem does not state a uniform bound in the case~\ref{AlwaysPositive} and~\ref{BlowUp}. 

\begin{proof}
{\bf(i)} We  prove that the scheme satisfies the hypotheses of the theorem in~\Cref{sec:existunique}. We set
\begin{align*}
A_{i+1/2}(u_i^{n+1}, u_{i+1}^{n+1}) = \f{\dt}{\dx} F_{i+1/2}^{n+1}.
\end{align*}
Then, the simplest case is when $\varphi$ satisfies~\ref{Logistic}, since clearly $u_i^{n+1} \equiv 0$ and $u_i^{n+1} \equiv M$ are respectively a sub- and supersolution. 
When $\varphi$ satisfies~\ref{AlwaysPositive} and~\ref{BlowUp}, $u_i^{n+1} \equiv 0$ is again a subsolution, while for the supersolution we choose $\bar{U}^{n+1}_i = g^{-1}(C +v_i^n)$. Such a choice indeed makes the flux terms vanish:
\begin{equation*}
\begin{split}
F_{i+1/2}^{n+1} =& - \f{\vp(u)_{i+1/2}^{n+1}}{\dx}  \left[ \left(g(\bar{U}_{i+1}^{n+1}) - v_{i+1}^{n}\right) - \left(  g(\bar{U}_{i}^{n+1}) - v_{i}^{n}\right) \right]\\ 
=& - \f{\vp(u)_{i+1/2}^{n+1}}{\dx}  \left[ C -C\right] = 0.
\end{split}
\end{equation*}
Thus $\bar{U}^{n+1}_i$ is a supersolution as soon as $g^{-1}(C + v_i^n) \geq u_i^n$, which holds when $C$ is taken to be large enough because we recall that assumption~\ref{BlowUp} ensures that $g(u)$ tends to $+\infty$ as $u$ tends to $+\infty$.\par

Moreover, the scheme is monotone since 
\begin{equation*}
\begin{split}
\partial_1 A \ip (u_i^{n+1}, u_{i+1}^{n+1}) = &- \frac{\Delta t}{(\Delta x) ^2} u_{i+1}^{n+1} \psi'(u_i^{n+1}) \left[ g(u_{i+1}^{n+1}) - v_{i+1}^n -\left( g(u_i^{n+1}) - v_i^n \right) \right]_{+}\\
&- \frac{\Delta t}{(\Delta x) ^2} \psi(u_{i+1}^{n+1}) \left[ g(u_{i+1}^{n+1}) - v_{i+1}^n -\left( g(u_i^{n+1}) - v_i^n \right) \right]_{-} \\
&- \frac{\Delta t}{(\Delta x) ^2} \varphi(u)^{n+1} \ip \left[- g'(u_i^{n+1})\right] \geq 0,
\end{split}
\end{equation*}
where
\begin{align*}
[x]_{+} = \begin{cases}x &\quad \text{for} \quad x \geq 0, \\ 0 &\quad \text{for} \quad x < 0 \end{cases}
\qquad \text{and} \qquad 
[x]_{-} = \begin{cases} 0 &\quad \text{for} \quad x \geq 0, \\ x &\quad \text{for} \quad x < 0, \end{cases} \; 
\end{align*}
so that $[x]_{+} \geq 0, \; [x]_{-} \leq 0$ for all $x$.  \\
{\bf (ii)} Positivity of discrete solutions and the upper bound in the logistic case follow from the subsolution and supersolution built in step (i).
\\
{\bf (iii)} Preservation of steady states at the discrete level follows immediately from the form we have chosen for the fully discrete fluxes.
\\
{\bf (iv)} For the energy inequality, we remark that the contribution regarding time discretization is treated in the introduction of the present section. The space term is exactly treated as in the corresponding subsection of Section~\ref{sec:semidiscret}.~\end{proof}

\subsection{The Scharfetter-Gummel approach}
\label{sec:dsg}

In~\ref{eq:fd}, the fully discrete Scharfetter-Gummel flux reads
\begin{equation*}
F_{i+1/2}^{n+1} = \big( e^{v^{n}-g(u^{n})}  \vp\big(u^{n+1}\big) \big)_{i+1/2} \left[   e^{g(u_{i+1}^{n+1}) - v_{i+1}^{n}} - e^{g(u_{i} ^{n+1}) - v_{i}^{n} }  \right],   \quad i=1,\ldots,I-1.
\end{equation*}
As for the gradient flow approach, we need the upwind technique to get a scheme which satisfies the hypotheses in~\Cref{sec:existunique}. So, we set for $i = 1, \dots, I-1$
\begin{equation*}
\left( e^{v^{n}-g(u^{n})}  \vp\left(u^{n+1}\right) \right)_{i+1/2} := 
\begin{cases}
&u_{i+1}^{n+1} \psi(u_i^{n+1}) e^{v_{i+1}^n - g(u_{i+1}^{n})}, \\ &\qquad \text{if} \quad e^{\left(g(u_{i+1}^{n+1}) - v_{i+1}^n \right)} -e^{\left( g(u_i^{n+1}) - v_i^n \right)} \geq 0, \\[1mm]
&u_{i}^{n+1} \psi(u_{i+1}^{n+1}) e^{v_{i}^n - g(u_{i}^{n})}, \\ &\qquad \text{if} \quad e^{\left(g(u_{i+1}^{n+1}) - v_{i+1}^n \right)} -e^{\left( g(u_i^{n+1}) - v_i^n \right)} < 0.
\end{cases}
\end{equation*}

\begin{proposition} [Fully discrete Scharfetter-Gummel scheme] We assume either~\ref{AlwaysPositive} and~\ref{BlowUp}, or~\ref{Logistic} and give the $u_i^0 \geq 0$. Then, the scheme~\ref{eq:fd}--\ref{disc:gsflux}--\ref{disc:gfupwind} has the following properties:
	\\[1mm]
	{\bf (i)} \, the solution $u_i^{n}$ exists and is unique, for all $i = 1, \dots, I$, and $n\geq 1$;
	\\[1mm]
	{\bf (ii)}\,  it satisfies $u_i^{n} \geq 0$, and $u^n_i \leq M$ for the case~\ref{Logistic}, if it is initially true;
	\\[1mm]
	{\bf (iii)} the steady states $g(u_i) -v_i = \mu $ are preserved;
	\\[1mm]
	{\bf (iv)} the discrete energy dissipation inequality is satisfied
	\begin{equation*}
	\begin{split}
	{\mathcal E}^{n+1} -  {\mathcal E}^{n} \leq & - \f{\dt}{\dx} 
	\sum_{i=1}^{I-1}  \Big{ \{ } \left( e^{v^{n}-g(u^{n})}  \vp\ga(u^{n}\dr) \right)_{i+1/2} \cdot \left[   e^{g\left(u_{i+1}^{n+1}\right) - v_{i+1}^{n}} - e^{g\ga(u_{i} ^{n+1}\dr) - v_{i}^{n} }  \right]\cdot \\[1mm]
	& \qquad \qquad \quad \left[  \left(g\left(u_{i+1}^{n+1}\right) - v_{i+1}^{n} \right) - \left( g\left( u_{i}^{n+1} \right) - v_{i}^{n}  \right)\right] \Big{\}}\leq 0.
	\end{split}
	\end{equation*}
\end{proposition}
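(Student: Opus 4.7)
The plan is to mirror the proof of the fully discrete gradient flow scheme, verifying the four requirements through the monotone-scheme framework of \Cref{sec:existunique}. Setting
\begin{equation*}
A_{i+1/2}(u_i^{n+1}, u_{i+1}^{n+1}) := \f{\dt}{\dx} F_{i+1/2}^{n+1},
\end{equation*}
I would first exhibit a sub- and a supersolution. As in the gradient flow case, the subsolution $u_i^{n+1} \equiv 0$ is immediate. For the supersolution, in the logistic case~\ref{Logistic} one takes $u_i^{n+1} \equiv M$, while under~\ref{AlwaysPositive}--\ref{BlowUp} I would choose $\bar U_i^{n+1} = g^{-1}(C + v_i^n)$: with this choice the exponentials $e^{g(\bar U_i^{n+1}) - v_i^n} = e^{C}$ are all equal in $i$, so every bracket vanishes and hence $F_{i+1/2}^{n+1} = 0$; assumption~\ref{BlowUp} allows one to pick $C$ large enough that $\bar U_i^{n+1} \geq u_i^n$ for every $i$.

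Next I would verify the monotonicity condition $\partial_1 A_{i+1/2} \geq 0$ (and the symmetric one for $\partial_2 A_{i-1/2}$), which is the main technical step. Writing $F_{i+1/2}^{n+1}$ as (interpolant)$\,\cdot\,$(bracket of exponentials), differentiation with respect to $u_i^{n+1}$ splits into two contributions: one from the interpolant, present only in the branch of the upwinding that retains $u_i^{n+1}$, and one from the bracket that produces a factor $+\,g'(u_i^{n+1})\, e^{g(u_i^{n+1}) - v_i^n} \geq 0$ since $g' = 1/\vp > 0$. The upwind rule is designed precisely so that the interpolant contribution combines with the correct sign: exactly as in \ref{disc:gfupwind}, one branch yields a nonnegative $[\cdot]_{+}$ coefficient and the other a nonpositive $[\cdot]_{-}$ coefficient, each multiplied by a nonnegative weight built from $\psi$ and from the frozen factor $e^{v_i^n - g(u_i^n)} \geq 0$. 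I expect this to be the main obstacle: one has to juggle the sign of the bracket of exponentials with the sign of $g(u_i^{n+1}) - g(u_{i+1}^{n+1}) + v_{i+1}^n - v_i^n$ that selects the upwind branch, and use strict monotonicity of $x \mapsto e^x$ to check that the two signs coincide. Once $\partial_1 A_{i+1/2} \geq 0$ is established, the theorem of \Cref{sec:existunique} yields simultaneously \textbf{(i)} well-posedness and \textbf{(ii)} the positivity and the upper bound via comparison with the sub- and supersolutions.

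For \textbf{(iii)}, any configuration $g(u_i) - v_i = \mu$ makes $e^{g(u_i) - v_i} = e^\mu$ independent of $i$, so each bracket of exponentials vanishes, $F_{i+1/2}^{n+1} \equiv 0$, and the scheme is stationary. For \textbf{(iv)}, I would reuse the time-discretization estimate carried out at the start of \Cref{sec:discret}, which gives
\begin{equation*}
{\mathcal E}^{n+1} - {\mathcal E}^{n} \leq \sum_{i=1}^{I} (u_i^{n+1} - u_i^n)\,\bigl(g(u_i^{n+1}) - v_i^{\beta_n}\bigr),
\end{equation*}
where the symmetry of $K$ together with the choice $\beta = \tfrac12$ in the GKS case allows us to replace $v_i^{\beta_n}$ by $v_i^{n}$ on the right-hand side. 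Inserting the scheme~\ref{eq:fd} for $u_i^{n+1} - u_i^n$ and performing Abel summation, which is clean thanks to $F_{1/2}^{n+1} = F_{I+1/2}^{n+1} = 0$, I obtain exactly the space identity of \Cref{sec:sdsg}, namely a sum of terms of the form $-(\text{interpolant})\times(\text{bracket of }e)\times(\text{bracket of }g-v)$. Each such term is nonpositive since the interpolant is nonnegative and the two remaining brackets carry the same sign by monotonicity of the exponential. This produces the energy inequality of \textbf{(iv)} and closes the proof.
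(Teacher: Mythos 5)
Your proposal is correct and follows exactly the route the paper intends: the paper's own proof of this proposition is the single line ``We argue exactly as for the gradient flow approach,'' and your write-up is precisely that argument carried out for the Scharfetter--Gummel fluxes (zero and $g^{-1}(C+v_i^n)$, resp.\ $M$, as sub-/supersolutions, upwinding to get $\partial_1 A_{i+1/2}\geq 0$, vanishing exponential brackets for steady states, and the convexity-plus-Abel-summation energy estimate). No substantive difference from the paper's approach.
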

\begin{proof}
We argue exactly as for the gradient flow approach.~\end{proof}

\subsection{The upwinding approach}
\label{sec:dup}

The upwind scheme is driven by simplicity and, in~\ref{eq:fd}, the fluxes are defined by
\begin{equation*}
F_{i+1/2}^{n+1} =- \frac{1}{\dx} \left[ u_{i+1}^{n+1} - u_{i}^{n+1} - \vp(u)_{i+1/2}^{n} \left(v_{i+1}^{n}  - v_{i}^{n}\right) \right],  \qquad i=1,\ldots,I-1, 
\end{equation*}
with
\begin{equation}
\vp(u)_{i+1/2}^{n+1} := 
\bepa
u_{i}^{n+1} \psi(u_{i+1}^{n+1})  \quad \text{when} \quad v_{i+1}^{n} -  v_{i}^{n} \geq 0,
\\[1mm]
u_{i+1}^{n+1} \psi(u_{i}^{n+1}) \quad \text{when} \quad   v_{i+1}^{n} -  v_{i}^{n} < 0,
\eepa
\label{disc:upwind}
\end{equation}
as in~\ref{disc:gfupwind}, but this time depending on the sign of $v_{i+1}^{n} -  v_{i}^{n}$.
\\
\begin{proposition}[Fully discrete upwind scheme] We assume either~\ref{AlwaysPositive} and~\ref{BlowUp}, or~\ref{Logistic} and give the $u_i^0 \geq 0$. Then, the scheme~\ref{eq:fd}--\ref{disc:gsflux}--\ref{disc:gfupwind} has the following properties:
	\\[1mm]
	{\bf (i)} \, the solution $u_i^{n}$  exists and is unique, for all $i = 1, \dots, I$,  and $n\geq 1$;
	\\[1mm]
	{\bf (ii)}\,  it satisfies  $u_i^{n} \geq 0$,  and  $u^n_i \leq M$ for  the case~\ref{Logistic}, if it is initially true.
\end{proposition}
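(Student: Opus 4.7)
The plan is to mimic the proof strategy of the gradient flow proposition, reducing everything to the general well-posedness result for monotone implicit schemes recalled in~\Cref{sec:existunique}. Setting
\begin{equation*}
A\ip(u_i^{n+1}, u_{i+1}^{n+1}) = \f{\dt}{\dx} F\ip^{n+1},
\end{equation*}
the scheme~\ref{eq:fd} with the upwind flux takes the canonical form to which the theorem applies, and I then need to (a) exhibit a subsolution and a supersolution and (b) verify monotonicity of $A\ip$ in its two arguments.

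\textbf{Sub- and supersolutions.} The constant zero sequence is a subsolution since both the diffusive and drift parts of $F\ip^{n+1}$ vanish when $u^{n+1} \equiv 0$. Under the logistic assumption~\ref{Logistic}, the constant $\bar{U}^{n+1}_i \equiv M$ is a supersolution: because $\psi(M)=0$, the upwind choice~\ref{disc:upwind} forces $\vp(u)\ip^{n+1}=0$, so the flux vanishes and it suffices that $M \geq u_i^n$, which is inherited by induction. Under~\ref{AlwaysPositive} and~\ref{BlowUp}, I would take $\bar{U}^{n+1}_i \equiv C_n$, a large constant chosen so that $C_n - \|u^n\|_\infty - \f{\dt}{\dx^2}\vp(C_n)\,\omega_n \geq 0$, where $\omega_n$ controls the discrete second difference of $v^n$; the growth restriction provided by~\ref{BlowUp} is what makes such a $C_n$ available. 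This is the genuine departure from the gradient flow proof: the upwind flux carries no $g(u)-v$ structure, so the exact cancellation via $g^{-1}(C+v_i^n)$ is unavailable and the supersolution has to be obtained by a softer comparison argument.

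\textbf{Monotonicity.} Direct differentiation yields $\p_1 A\ip = \f{\dt}{\dx^2} + \f{\dt}{\dx^2}(v^n_{i+1}-v^n_i)\,\p_1\vp(u)\ip^{n+1}$ and analogously $\p_2 A\ip = -\f{\dt}{\dx^2} + \f{\dt}{\dx^2}(v^n_{i+1}-v^n_i)\,\p_2\vp(u)\ip^{n+1}$. Splitting according to the sign of $v^n_{i+1} - v^n_i$, which dictates the upwind branch selected in~\ref{disc:upwind}, and combining $\psi \geq 0$ with $\psi' \leq 0$ from~\ref{LogisticSensitivity}, each correction term carries the sign required for $\p_1 A\ip \geq 0$ and $\p_2 A\ip \leq 0$. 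This is precisely why the upwinding is tailored to the sign of $v^n_{i+1}-v^n_i$ (and not, as in the gradient flow case, to the sign of the discrete $g(u)-v$ gradient).

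\textbf{Conclusions.} Feeding these ingredients into the monotone scheme theorem of~\Cref{sec:existunique} delivers (i) existence and uniqueness of $u^{n+1}$ sandwiched between the subsolution and supersolution, and (ii) $u_i^{n+1} \geq 0$ for all $i,n$, together with the upper bound $u_i^{n+1} \leq M$ in the logistic case. The delicate step, as emphasized above, is constructing the supersolution in the~\ref{AlwaysPositive}--\ref{BlowUp} setting without the gradient flow cancellation; this structural mismatch also explains why the proposition does not assert either steady state preservation or energy dissipation, both of which genuinely fail for the upwind scheme and motivate the two alternative approaches developed earlier in the paper.
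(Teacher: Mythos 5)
Your overall route is the paper's route: cast the scheme as $u_i + A\ip(u_i,u_{i+1}) - A\im(u_{i-1},u_i) = u_i^n$, exhibit sub- and supersolutions, check the monotonicity hypotheses, and invoke the theorem of \Cref{sec:existunique}. Your monotonicity computation, split on the sign of $v_{i+1}^n - v_i^n$ and using $\psi\geq 0$, $\psi'\leq 0$ from \ref{LogisticSensitivity}, is exactly the paper's displayed calculation, and your zero subsolution and the supersolution $\bar U\equiv M$ in the case \ref{Logistic} are correct.

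The gap is in your supersolution for the case \ref{AlwaysPositive}--\ref{BlowUp}. For a constant $\bar U\equiv C_n$ the flux difference reduces to $\f{\dt}{\dx^2}\,\vp(C_n)\big(v_{i+1}^n - 2v_i^n + v_{i-1}^n\big)$, so the supersolution inequality indeed reads $C_n - \f{\dt}{\dx^2}\vp(C_n)\,\omega_n \geq \|u^n\|_\infty$ as you write. But \ref{BlowUp} does not supply the growth restriction you invoke: it says $1/\vp\notin L^1(a,+\infty)$ and $g(u)\to+\infty$, which is compatible with exactly linear growth of $\vp$. For the basic case $\vp(u)=u$ (which satisfies \ref{BlowUp}), the inequality becomes $C_n\big(1 - \f{\dt}{\dx^2}\omega_n\big)\geq \|u^n\|_\infty$, and no $C_n$ exists once $\f{\dt}{\dx^2}\omega_n\geq 1$; since $\omega_n\sim \dx^2\|\p_{xx}v\|_\infty$, your construction only works under a time-step restriction of the type $\dt\,\|\p_{xx}v^n\|_\infty<1$, which defeats the paper's stated goal of unconditional solvability. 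You are right that the gradient-flow supersolution $g^{-1}(C+v_i^n)$ does not transfer verbatim, because the upwind flux has no $g(u)-v$ cancellation --- the paper itself is terse here, saying only ``arguing as for the gradient flow approach'' --- but your replacement does not close the argument: you need either a non-constant supersolution adapted to the upwind flux (e.g.\ built from the recursion $\bar U_{i+1}-\bar U_i = \vp(\bar U)\ip (v_{i+1}^n-v_i^n)$ that makes the flux vanish) or an explicit smallness condition on $\dt$, and as written the claim that \ref{BlowUp} makes $C_n$ available is false.
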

\begin{proof}
As for the gradient flow approach, the above choice makes the scheme monotone, because
\begin{equation*}
\begin{split}
\frac{\dt}{\dx} \partial_1 F\ip (u_i^{n+1}, u_{i+1}^{n+1}) = & - \frac{\Delta t}{\Delta x ^2} \Big(- 1 - u_{i+1}^{n+1} \psi'(u_i^{n+1}) \left[ v_{i+1}^n - v_i^n \right]_{-}\\
 & \qquad \qquad - \psi(u_{i+1}^{n+1}) \left[v_{i+1}^n  - v_i^n \right]_{+} \Big) \geq 0.
\end{split}
\end{equation*}
Thus, arguing as for the gradient flow approach and relying on the results in Appendix~\ref{sec:existunique}, existence and uniqueness of the discrete solution as well as preservation of the initial bounds follow immediately.~\end{proof}

Thus, choice~\ref{disc:upwind} enables to prove that the scheme is well-defined, satisfies~$u_i^n \geq 0$ and preserves the bound $u_i^n \leq M$ for the case~\ref{Logistic}, but the energy dissipation inequality is lost. Also the steady states, in this case, are defined by the nonlinear relation $u_{i+1} - u_{i} = \vp(u)_{i+1/2} (v_{i+1}  - v_{i})$ which are usually not in the form~\ref{eq:discretestst}.

\section{Numerical simulations}
\label{sec:numericalsimulations}

\subsection{The Fokker-Planck equation, $\vp(u)=u$}
\label{sec:numFP}

We first present the numerical implementation of the Fokker-Planck equation with $\vp(u)=u$. We here compare the Scharfetter-Gummel and upwind approaches (the gradient flow gives the same solution as the Scharfetter-Gummel and is thus not presented here). Both these schemes have error convergence of order $1$ in space, as it can be easily checked.

We consider a first case with $\chi/ D = 24$, with $I=100$ and an initial density $u^0=1$. We take the velocity field as
\begin{equation*}
v=x(1-x) |x-0.3|.
\end{equation*}

\noindent In~\Cref{fig:fp_SG_UP}, we compare the approximate stationary solutions obtained with the upwind scheme (blue, dashed line) and the Scharfetter-Gummel scheme (red line) with the exact stationary solution (black line), which in this case has the form $u(x) = C e^{\chi v(x)/D}$, with $C = \left( \int_{0}^{1} e^{\chi v(x)/D} dx \right)^{-1}$ so that the mass of the stationary solution is normalized. In this first case, the two schemes have no significant differences; this is a major difference with the Keller-Segel equation, as we show it in the next subsection.
\begin{figure}[htp]
\begin{center}
	\includegraphics[width=0.4\linewidth]{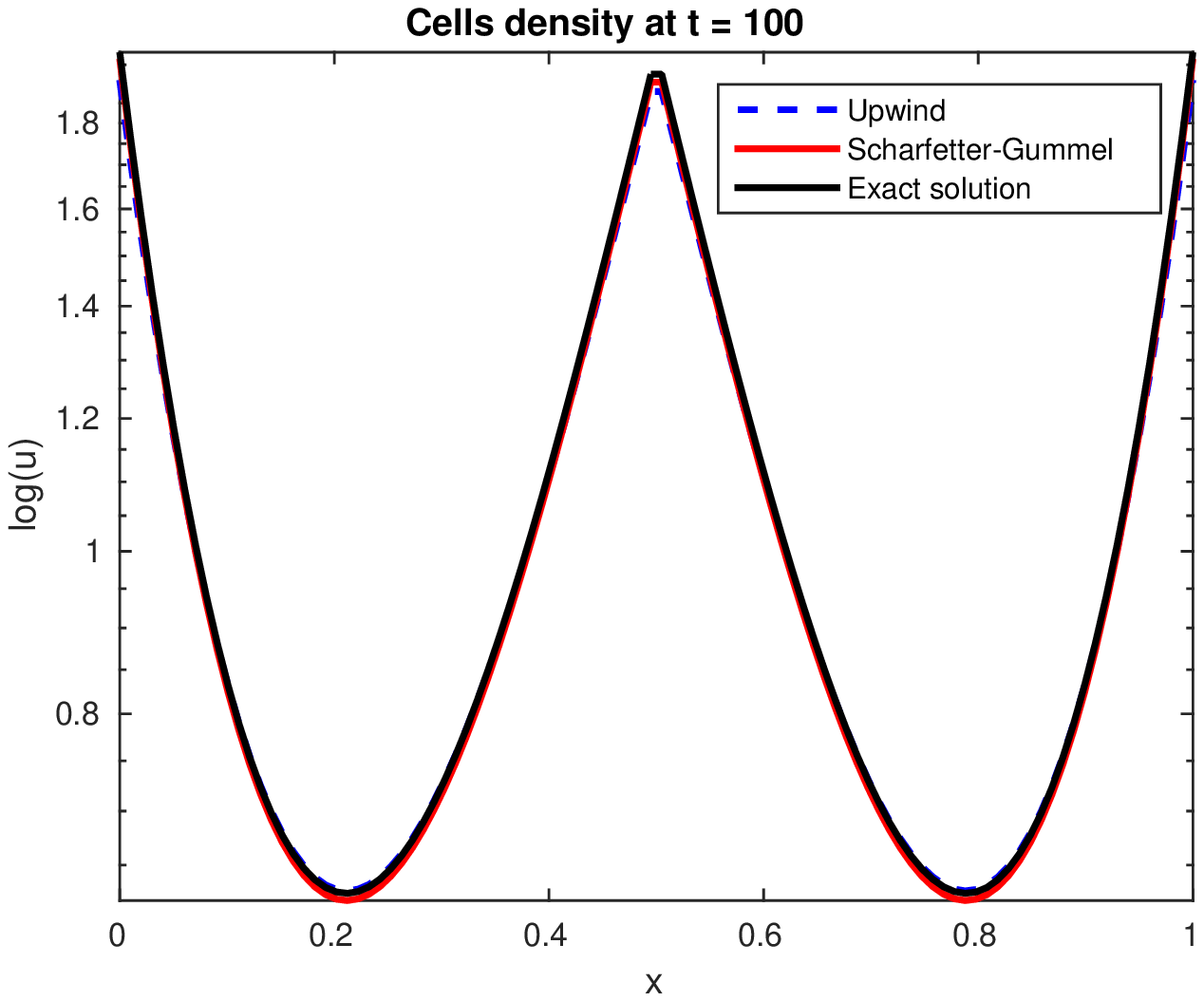}
	\includegraphics[width=0.4\linewidth]{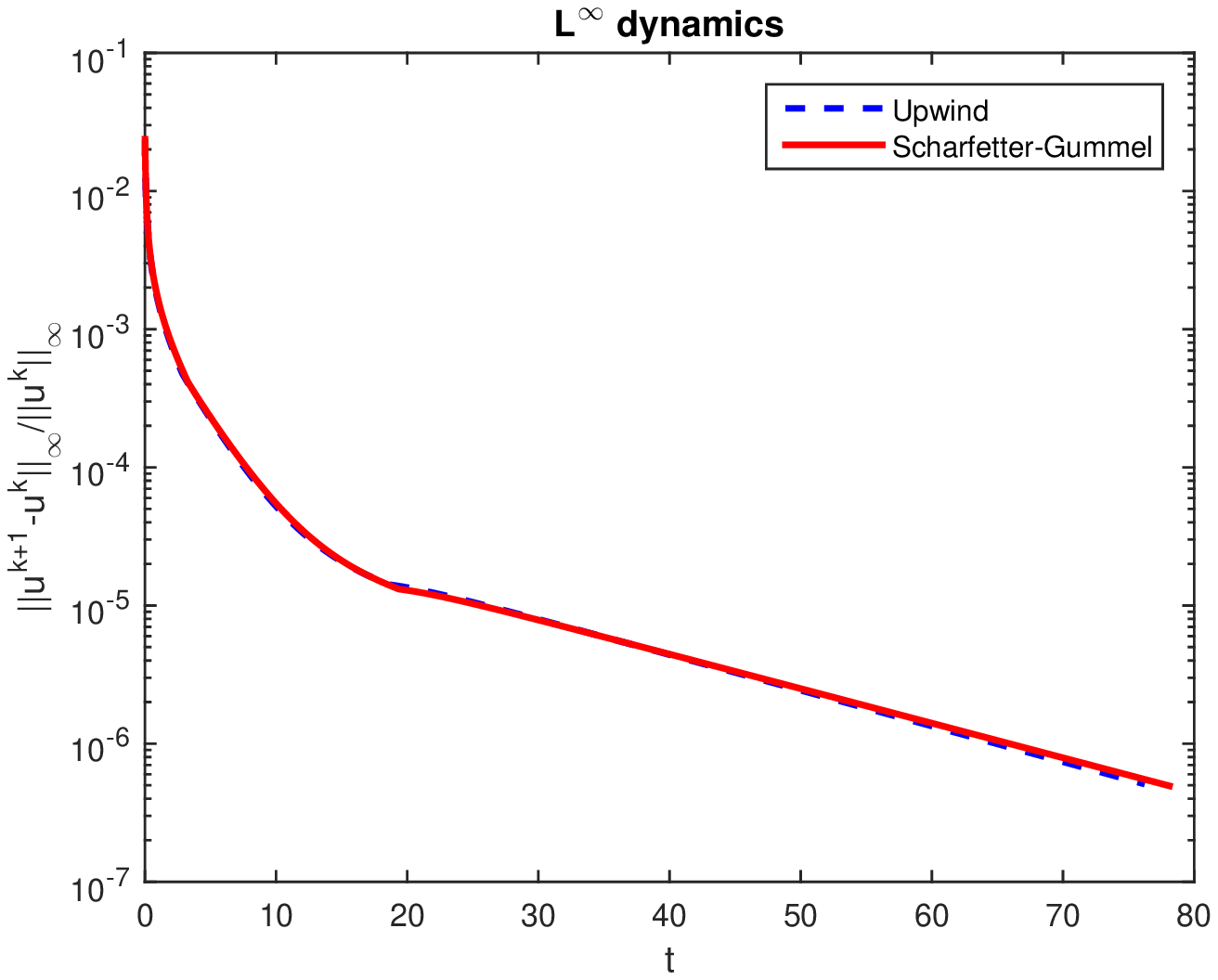}
	\caption{Left: Comparison of solutions of the Scharfetter-Gummel (red line) and upwind (blue, dashed line) schemes at time $t= 100$ with the exact stationary solution (black line) for the linear Fokker-Planck equation with $\vp(u)=u$. We used $I = 100$ and $\Delta t = 0.01$. Right: normalized $L^\infty$ variation for the two schemes.}
	\label{fig:fp_SG_UP}
\end{center}
\end{figure}

\subsection{The nonlinear Keller-Segel equation}
\label{sec:numKS}

We turn to the equation~\ref{eq:u} coupled with~\ref{eq:KS} for two nonlinear forms of the chemotactic sensitivity function: the logistic form $\vp(u) = u (1-u)$ and the exponential form $\vp(u) = u e^{-u}$. The goal is to compare the discrete solutions obtained with the three numerical approaches presented above when patterns arise, namely when Turing instabilities drive the formation of spatially inhomogeneous solutions (we refer to~\cite{Murray2002} for an introduction to this topic). To this end, we slightly modify the original equation~\ref{eq:u} to
\begin{align}\label{eq:ucoeff}
\begin{cases}
\f{\p u}{\p t} - \f{\p}{\p x}\left[ D \f{\p u }{\p x} - \chi \vp(u) \f{\p v}{\p x}\right] = 0, \qquad &x\in (0,1), \, t > 0,
\\[1mm]
D\f{\p u }{\p x} - \chi \vp(u)  \f{\p v}{\p x} =0, \qquad &\text{for $x=0$ or $1$},
\\[1mm]
u(x,0)=u^0(x) \geq 0,\qquad &x\in [0,1],
\end{cases}
\end{align}
in order to emphasize the coefficients driving the instabilities: $D>0$, the constant diffusion coefficient and $\chi>0$, the chemosensitivity. The concentration of the chemoattractant $v$ remains driven by~\eqref{eq:KS}.
\\
\begin{figure}[htp]
\begin{center}
	\includegraphics[width=0.7\linewidth]{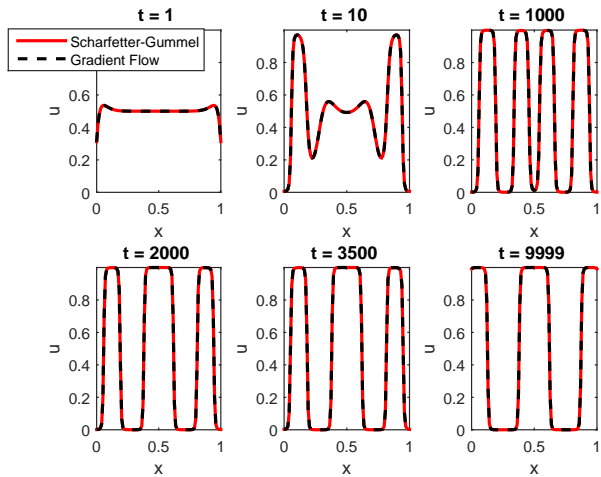}
	\caption{\textbf{Evolution in time of solutions} to \eqref{eq:ucoeff} in the logistic case $\vp(u) = u (1-u)$ with $\chi / D = 40$. We solved the equation with the Scharfetter-Gummel (red line) and the gradient flow scheme (black dashed line) with $I = 100$ and $\dt = 1$. There is no  major difference between the solutions given by the two schemes.}
	\label{fig:logistic_SG_GF_ev}
\end{center}
\end{figure}

We first consider the logistic case with $\chi / D = 40$. We take as initial condition a random spatial perturbation of the constant steady state $u^0 = 0.5$ and solve the equation with 100 equidistant points in $[0,1]$.

Figure~\ref{fig:logistic_SG_GF_ev} shows the evolution in time of the density $u_i^n$ obtained with the Scharfetter-Gummel (red line) and the gradient flow schemes (black, dashed line). After a rather short time, the initial spatial perturbation evolves, as expected, in spatially inhomogeneous patterns: a series of ``steps'' arise in the regions where the concentration of the chemoattractant is greater. After some time, a structure with a smaller number of steps forms when the two central plateaus merge. It is worth noticing that, even if the transitions from one structure to another happen very quickly, the time period during which these structures remain unchanged grows with the number of transitions that occurred. In~\cite{Potapov2005}, these intermediate patterns are called \textit{metastable}, and this peculiar phenomenon is explained in details.

\begin{figure}[htp]
\begin{center}
	\includegraphics[width=0.7\linewidth]{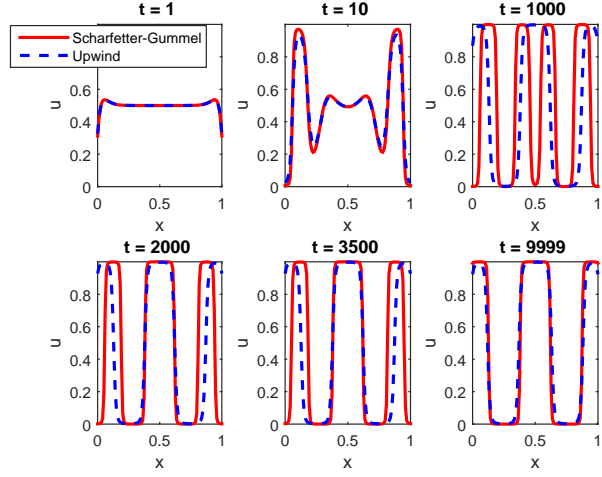}
	\caption{\textbf{Evolution in time of solutions} to~\ref{eq:ucoeff} in the logistic case $\vp(u) = u (1-u)$ with $\chi / D = 40$. We solved the equation with the Scharfetter-Gummel (red line) and the upwind scheme (blue, dashed line) with $I = 100$ and $\dt = 1$.}
	\label{fig:logistic_SG_UP_ev}
\end{center}
\end{figure}

\begin{figure}[htp]
	\centering
	\begin{subfigure}[b]{0.32\textwidth}
		\includegraphics[width=\textwidth]{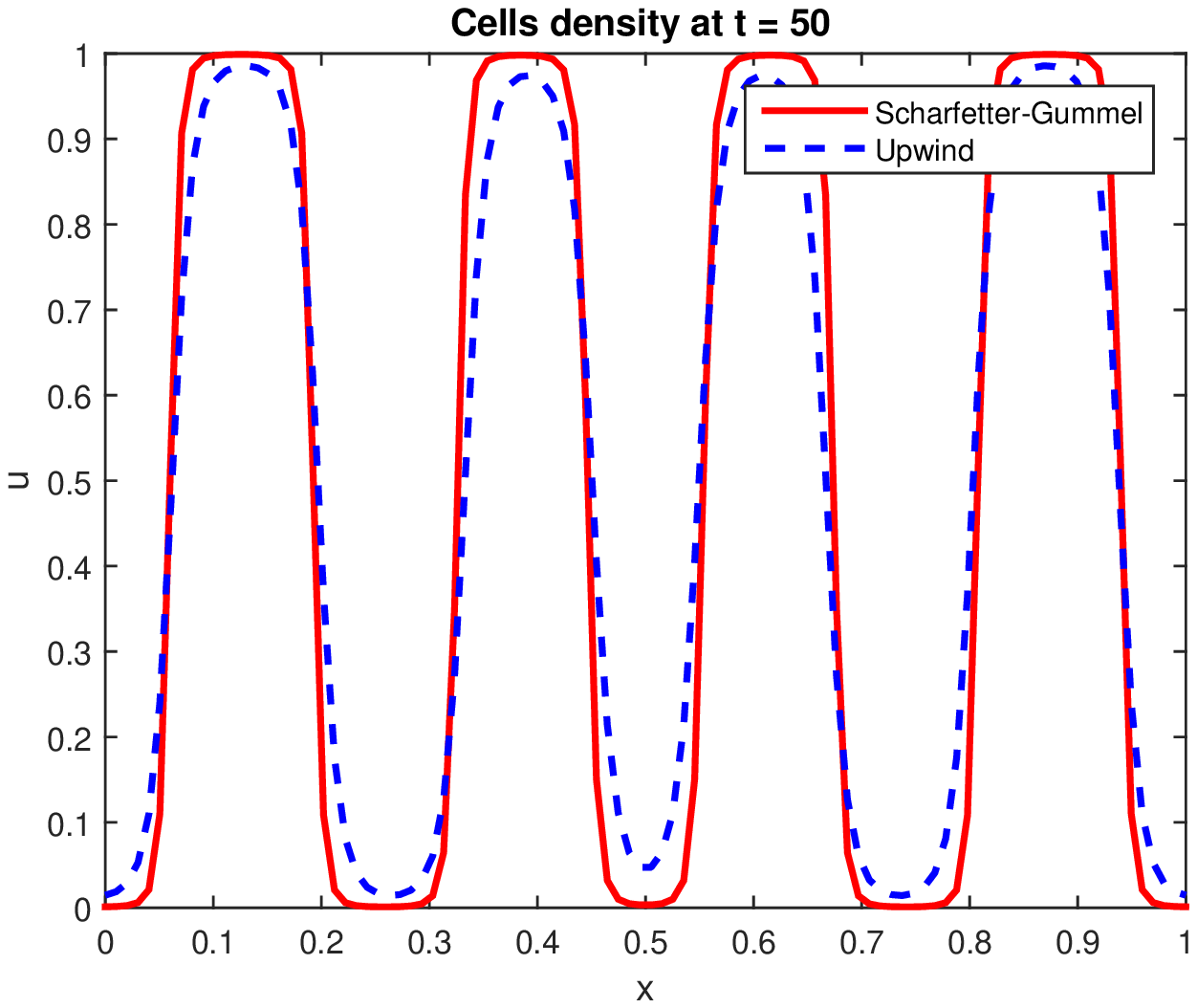}
		\caption{}
		\label{fig:logistic_SG_UP_50}
	\end{subfigure}
	\begin{subfigure}[b]{0.32\textwidth}
		\includegraphics[width=\textwidth]{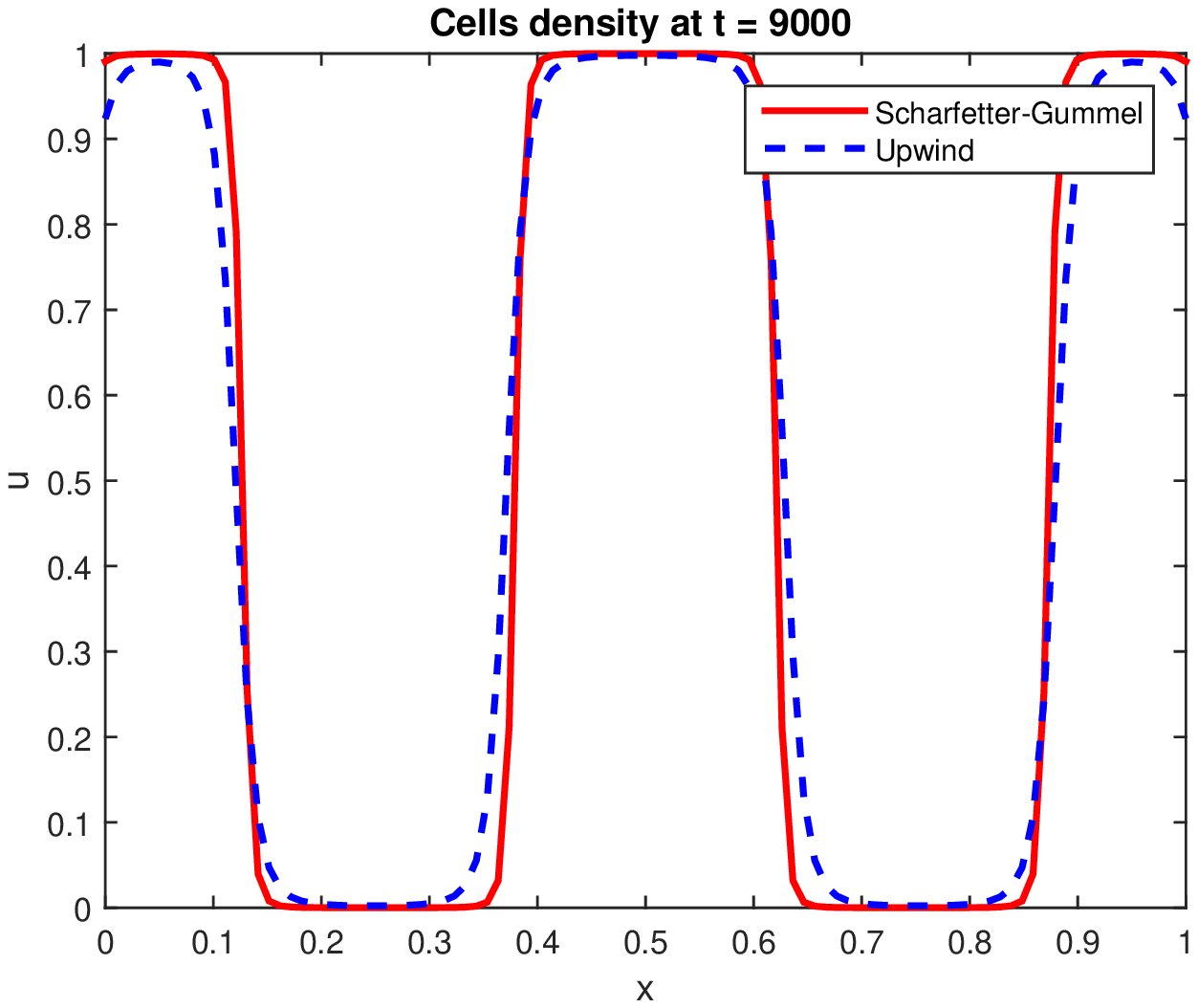}
		\caption{}
		\label{fig:logistic_SG_UP_9000}
	\end{subfigure}
	\begin{subfigure}[b]{0.32\textwidth}
		\includegraphics[width=\textwidth]{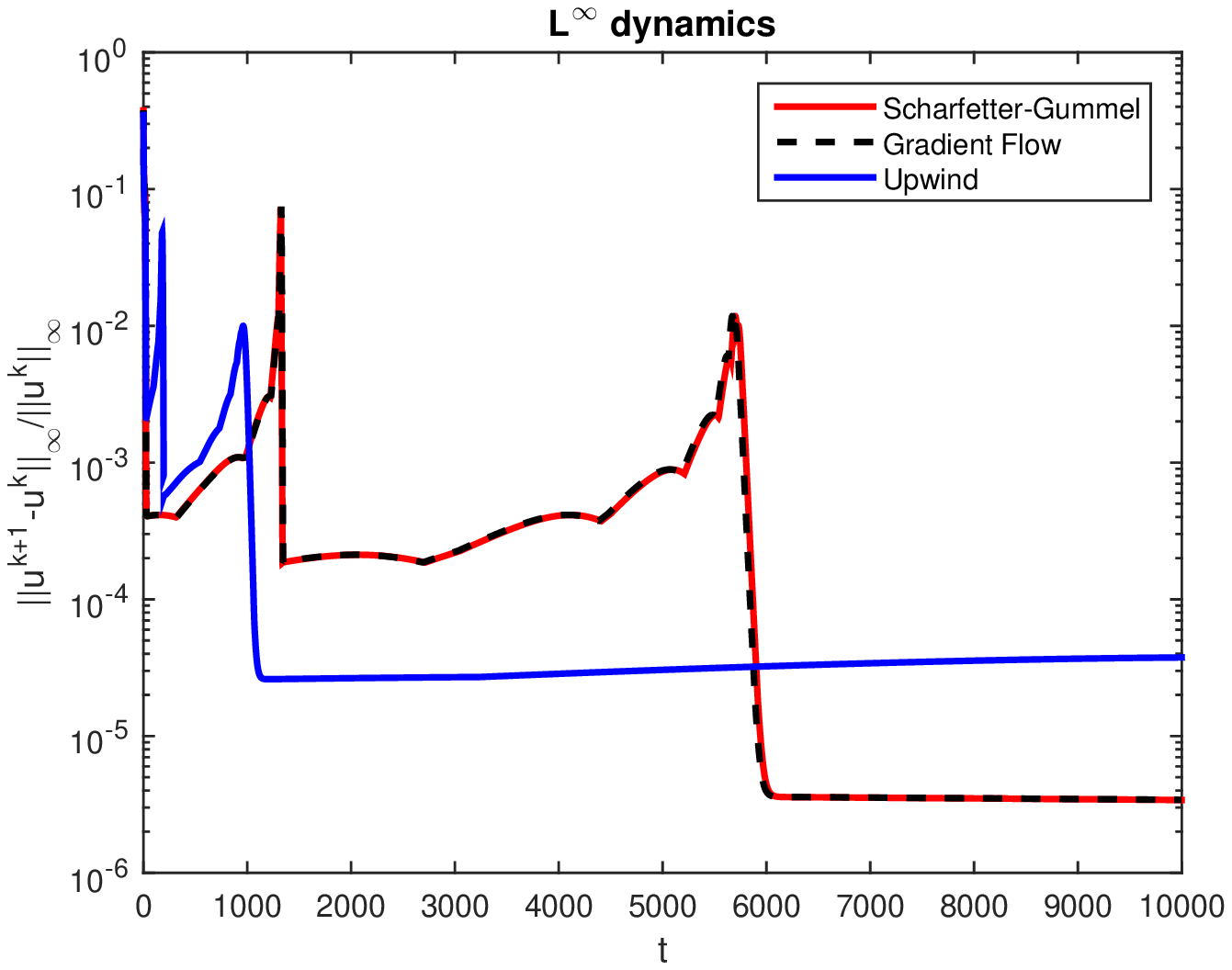}
		\caption{}
		\label{fig:logistic_conv}
	\end{subfigure}
	\caption{\textbf{Stationary profiles and dynamics.} \textsc{(A), (B)} Comparison of the stationary profiles of solutions to the Scharfetter-Gummel (red line) and the upwind (blue, dashed line) schemes at $t = 50$ and $t= 9000$. \textsc{(C)} Normalized $L^\infty$ variation for the three schemes.}
	\label{fig:logistic_SG_UP}
\end{figure}

\begin{figure}[htp]
	\centering
	\includegraphics[width=0.7\linewidth]{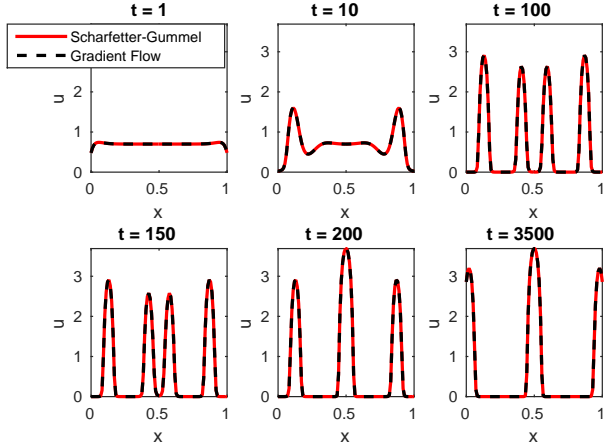}
	\caption{\textbf{Evolution in time of solutions} to~\ref{eq:ucoeff} in the exponential case $\vp(u) = u e^{-u}$ with $\chi / D = 24$. We solved the equation with the Scharfetter-Gummel (red line) and the gradient flow schemes (black, dashed line) with $I = 100$ and $\dt = 1$. As for the logistic model, the two schemes give the same solution.}
	\label{fig:exp_SG_GF_ev}
\end{figure}

As for the schemes, Figure~\ref{fig:logistic_SG_GF_ev} shows that the Scharfetter-Gummel and the gradient flow approaches give the same solution; no difference can be spotted. This is not true for the upwind approach. In Figure~\ref{fig:logistic_SG_UP_ev}, we compare the solutions to the Scharfetter-Gummel (red line) and the upwind (blue, dashed line) schemes. The upwind solution transitions faster from one metastable structure to the following than the Scharfetter-Gummel one. In fact, as proved above, the latter preserves discrete stationary profiles which, using the no-flux boundary conditions, solve the equation
\begin{equation}
\label{eq:stationary_profiles}
\f{\p u}{\p x} = \f{\chi}{D} \vp(u) \f{\p v}{\p x}.
\end{equation}

From~\ref{eq:stationary_profiles}, it is clear that, in the logistic case, the expected stationary solutions are 0-1 plateaus (or ``steps'') connected by a sigmoid curve which is increasing or decreasing when $v$ is. We refer again to \cite{Potapov2005} and also to \cite{Dolbeault2013} for a detailed study of the stationary solutions and their properties for the logistic Keller-Segel system. In Figures~\ref{fig:logistic_SG_UP_50} and~\ref{fig:logistic_SG_UP_9000}, we compare two stationary solutions to the Scharfetter-Gummel and upwind schemes, at time $t=50$ and $t = 9000$ respectively. The Scharfetter-Gummel scheme approximates the 0-1 plateaus of metastable solutions better than the upwind scheme, whose solutions have smoother edges.
We hypothesize that, since the Scharfetter-Gummel scheme preserves the metastable profiles better, it will also better preserve the time during which the solution will remain very close to a metastable state, up until the next transition. Consequently, this would mean that the upwind scheme accelerates the real dynamics. 

Moreover, in~\Cref{fig:logistic_conv} we compare the $L^{\infty}$ dynamics of the three schemes, computing the quantity $||u^n- u^{n-1}||_{\infty}/||u^{n-1}||_{\infty}$ for each $n$. The peaks shown by this figure correspond to the transitions from one profile to another. Observe that, for both solutions of the Scharfetter-Gummel and the gradient flow scheme, the two peaks are further away in time than the ones from the upwind scheme: this confirms that the upwind solution is in advance when it comes to transitioning. Nevertheless, from $t \approx 6000$, the relative errors of the upwind solution are consistently greater that the ones from the two other approaches, thus confirming that only the Scharfetter-Gummel and the gradient flow schemes better preserve the exact discrete stationary profiles as well as the metastable ones. Also notice that none of the schemes produce overshoot, due to our upwinding of the term in $\psi(u)$.
\\
\begin{figure}[htp]
	\centering
	\begin{subfigure}[b]{0.32\textwidth}
		\includegraphics[width=\textwidth]{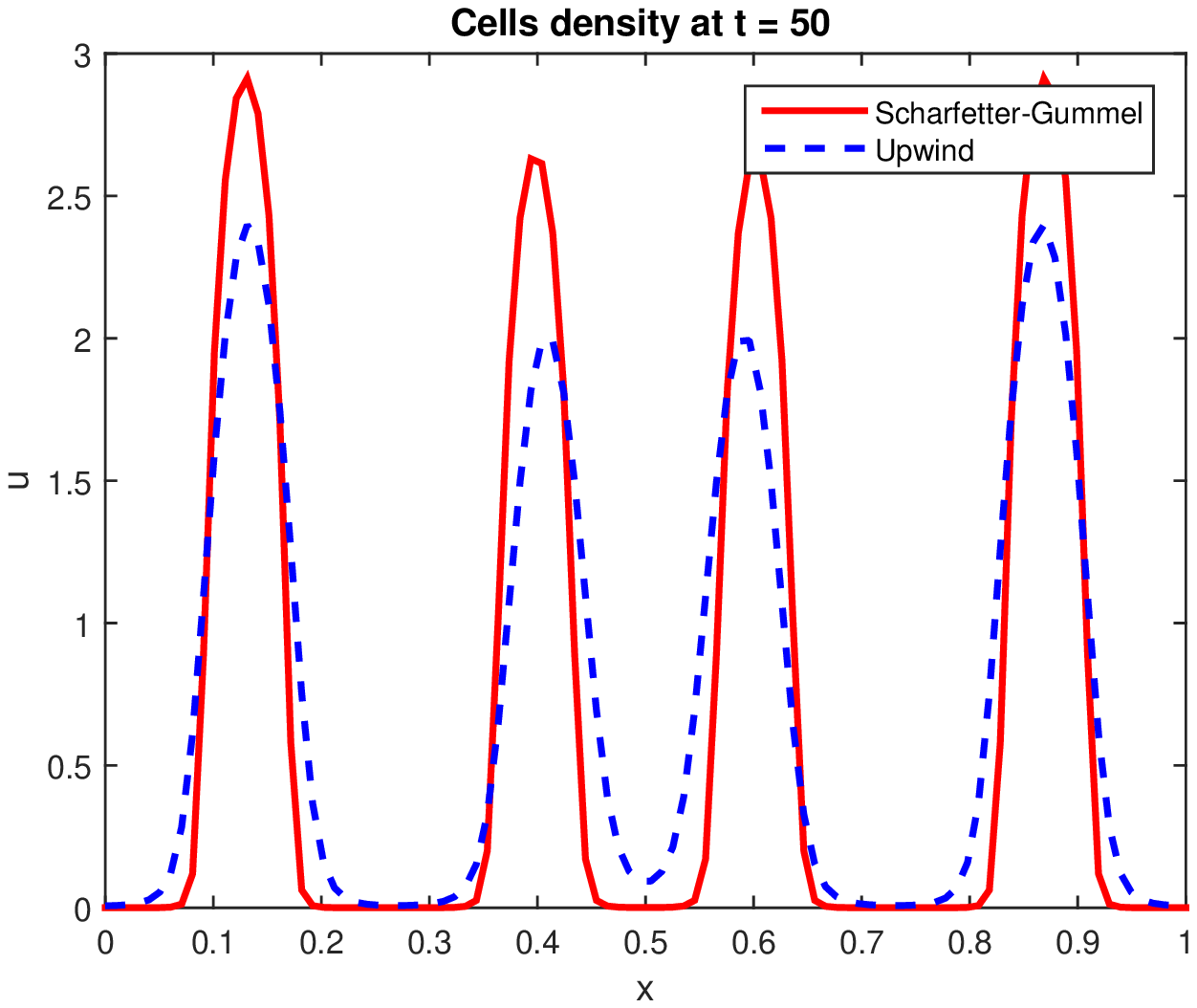}
		\caption{}
		\label{fig:exp_SG_UP_50}
	\end{subfigure}
	\begin{subfigure}[b]{0.32\textwidth}
		\includegraphics[width=\textwidth]{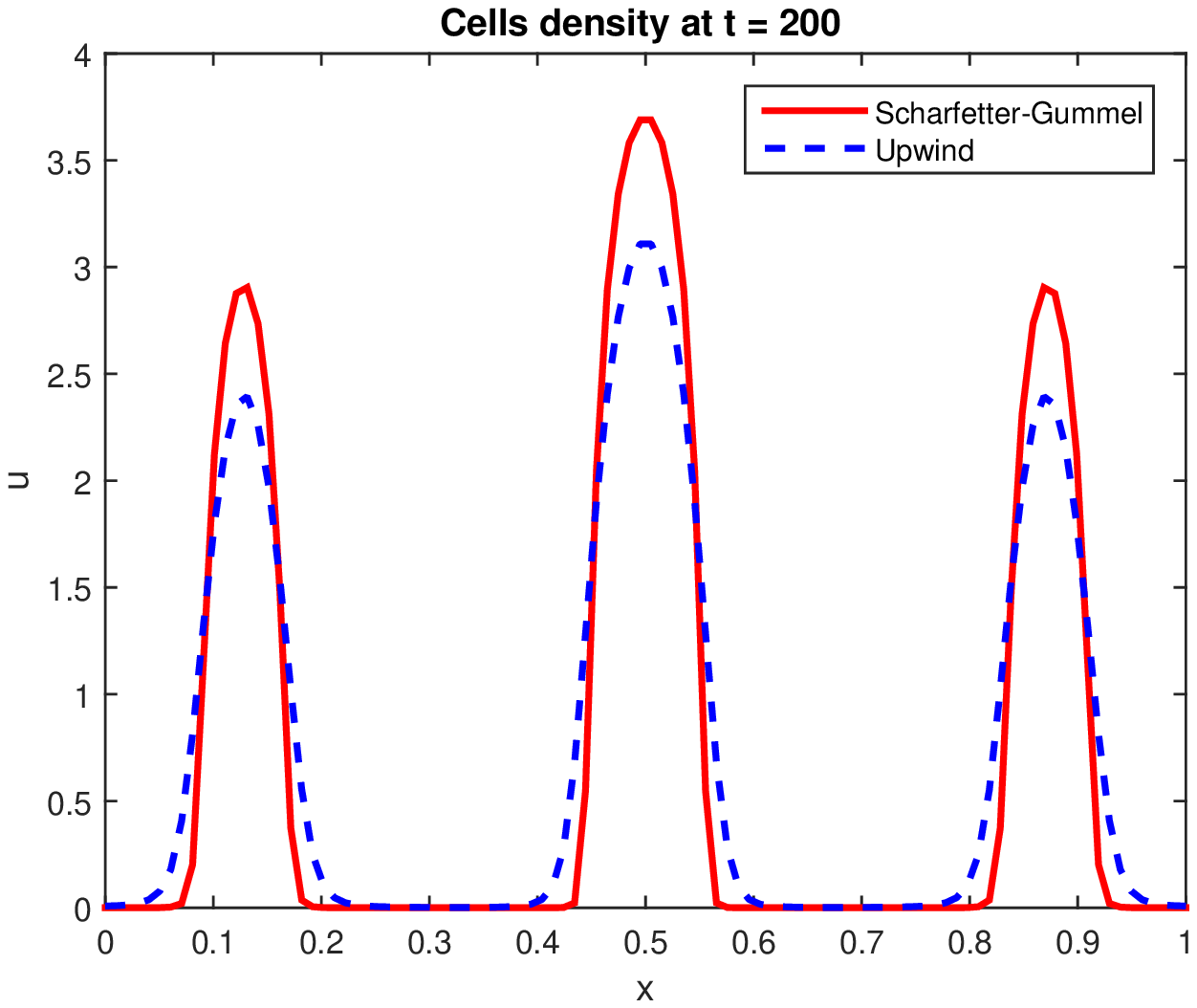}
		\caption{}
		\label{fig:exp_SG_UP_200}
	\end{subfigure}
	\begin{subfigure}[b]{0.32\textwidth}
		\includegraphics[width=\textwidth]{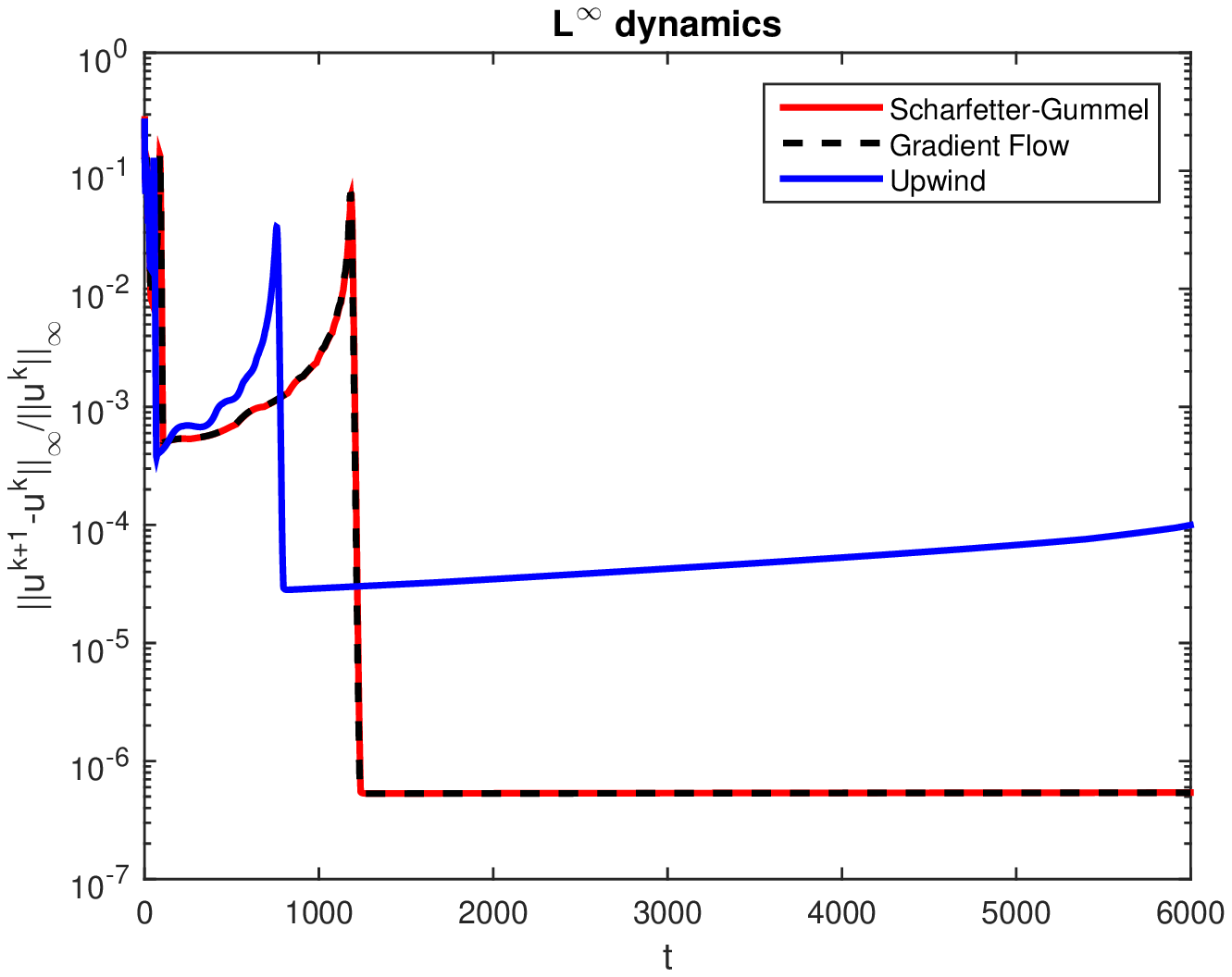}
		\caption{}
		\label{fig:exp_conv}
	\end{subfigure}
	\caption{\textbf{Stationary profiles and dynamics.} \textsc{(A), (B)} Comparison of the stationary profiles obtained with the Scharfetter-Gummel (red line) and the upwind scheme (blue, dashed line) at $t = 50$ (left) and $t= 200$. \textsc{(C)} Normalized $L^\infty$ variation for the three schemes.}
	\label{fig:ex_SG_UP}
\end{figure}

\begin{figure}[htp]
	\centering
	\includegraphics[width=0.7\linewidth]{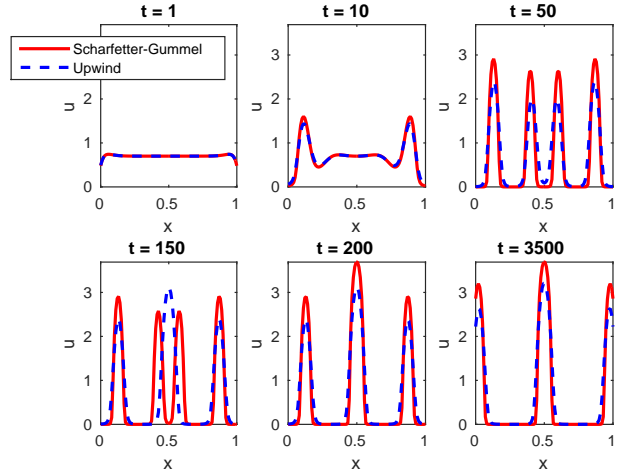}
	\caption{\textbf{Evolution in time of solutions} to \ref{eq:ucoeff} in the exponential case $\vp(u) = u e^{-u}$ with $\chi / D = 24$. We compare the solutions of the Scharfetter-Gummel (red line) and the upwind schemes (blue, dashed line) obtained with $I = 100$ and $\dt = 1$ for different times.}
	\label{fig:exp_SG_UP_ev}
\end{figure}

Next, we consider an exponentially decreasing form of the chemotactic sensitivity function with $\chi / D = 24$. Again, we take as initial condition a random spatial perturbation of the constant steady state $u^0 = 0.7$ and solve the equation on 100 equidistant points. The evolution in time of discrete solutions obtained with the three numerical approaches are compared in Figures~\ref{fig:exp_SG_GF_ev} and~\ref{fig:exp_SG_UP_ev}. In this model, no initial upper bound for the solution is imposed, so that the cells aggregate ``naturally'' where the chemoattractant has the greatest concentration, resulting in profiles without the plateaus observed in the logistic model. However, solutions face the same kind of transitions observed before, evolving from one stationary profile to another. As before, the Scharfetter-Gummel and the gradient flow approaches give the same solutions (\Cref{fig:exp_SG_GF_ev}), while the solution of the upwind scheme evolves faster. In Figures~\ref{fig:exp_SG_UP_50} and~\ref{fig:exp_SG_UP_200}, we compare stationary profiles obtained with the different approaches while in~\Cref{fig:exp_conv} we compare dynamics of the solutions. This last figure shows that, as for the logistic model, smaller errors can be expected for the Scharfetter-Gummel and gradient flow approaches when steady states are reached.

\section{Conclusion}

In the context of the Generalized Keller-Segel system, we have presented constructions of numerical schemes  which extend previous works~\cite{Carrillo2018, LiuWangZhou2018}, built on two different views of energy dissipation. Our construction unifies these two views, the gradient flow and Scharfetter-Gummel symmetrizations. Our schemes preserve desirable continuous properties: positivity, mass conservation, exact energy dissipation, discrete steady states. Being correctly tuned between implicit and explicit discretization, they can handle large time steps without CFL condition.

The present work is motivated by experiments of breast cancer cells put in a 3D structure mimicking the conditions they meet in vivo, namely in the extracellular matrix. After a few days, images of 2D sections show that cells have organized as spheroids, a phenomenon believed to be driven by chemotaxis. The spheroids can then be interpreted as Turing patterns for Keller-Segel type models and it is crucial to use appropriate schemes for them to be distinguishable from actual steady states or numerical artifacts. Comparing 2D simulations of such models with these experimental images will be the subject of future work.

In fact, it is important to remark that the schemes we presented here in 1D could be easily extended to rectangular domains, without loss of properties~\ref{prop1}--\ref{prop4}. However, it remains a perspective to treat more general geometries in a multi-dimensional setting with our approach.

\section*{Acknowledgments} The authors acknowledge partial funding from the ``ANR blanche" project Kibord [ANR-13-BS01-0004] funded by the French Ministry of Research. 
\\
B.P. has received funding from the European Research Council (ERC) under the European Union's Horizon 2020 research and innovation program (grant agreement No 740623).

\appendix

\section{Existence and uniqueness for monotone schemes}
\label{sec:existunique}

We recall sufficient conditions for which an implicit Euler discretization in time can be solved, independently of the step-size. This is the case for a monotone scheme. The proof relies on the existence of sub- and supersolutions, and thus  also yields the preservation of positivity and other pertinent bounds as we have used in Section~\ref{sec:discret}.  
\\ 
We consider the problem of finding a unique solution $\left(u_i^{n+1}\right)$ to the nonlinear equation arising in Section~\ref{sec:discret} which reads
\begin{equation}\label{eq:Au}
\frac{u_i^{n+1} -u_i^n}{\Delta t}+\f{1}{\dx}\big[ \underbrace {F (u_i^{n}, u_{i+1}^{n}, v_i^{n}, v_{i+1}^{n},  u_{i}^{n+1}, u_{i+1}^{n+1} )}_ 
{F_{i+\f12}^{n+1}}
- F_{i-\f12}^{n+1} \big] = 0,  \; i =1,\ldots,I.
\end{equation}

We write a general proof for a scheme of the form 
\begin{equation}\label{eq:A}
u_i+ A_{i+\f12} (u_{i},u_{i+1} )- A_{i-\f12} (u_{i-1},u_{i} ) = f_i,   \qquad i = 1, \ldots, I,
\end{equation}
where we consider the problem of finding a solution $(u_i)$ (which stands for $u_i^{n+1}$). 

Here we assume that the $f_i$ are given (it stands for $u^n_i$) and that the $A_{i+ \f12}$ are Lipschitz continuous and, a.e., 
\begin{equation}\label{A:monotone}
\p_1 A_{i+\f12}(\cdot,\cdot ) \geq 0, \qquad \p_2 A_{i+\f12}(\cdot,\cdot) \leq 0, \qquad i = 1, \ldots, I,
\end{equation}
and there are a supersolution $(\bar U_i)_{ i =1\; \ldots ,I}$  and a subsolution $(\underline U_i)_{ i =1\; \ldots ,I} $ such that for all $i =1,\ldots, I,$
\begin{align}\label{A:sub_super}
\bar U_i+ A_{i+\f12} (\bar U_i, \bar U_{i+1} ) - A_{i-\f12}  (\bar U_{i-1}, \bar U_i )\geq& f_i,\\ 
\underline U_i+ A_{i+\f12}( \underline U_i, \underline U_{i+1} )- A_{i-\f12} (\underline U_{i-1}, \underline U_i ) \leq& f_i.
\end{align}

We build a solution of~\ref{eq:A} using an evolution equation
\begin{equation}\label{eq:Adyn}
\f{du_i(t)}{dt} + u_i (t) + A_{i+\f12}( u_i(t), u_{i+1} (t))- A_{i-\f12}(u_{i-1}(t), u_i(t) ) = f_i,  \quad i = 1, \ldots, I.
\end{equation}
\begin{theorem} \label{th:monotone}
	Assume~\ref{A:monotone} and the existence of a subsolution and of a supersolution. Then, 
	\\[1mm]
	{\bf (i)} For a supersolution (resp. subsolution) initial data, the dynamics~\ref {eq:Adyn} satisfies $\f{d \bar u_i(t)}{dt} \leq 0$ (resp. $\f{d \underline u_i(t)}{dt} \geq 0$) for all times $t\geq 0$, and thus $\bar u_i(t)$ is a supersolution (resp. subsolution) for all times.
	\\[1mm]
	{\bf(ii)} A subsolution is smaller than a supersolution.
	\\[1mm]
	{\bf(iii)}  $\bar u_i(t)$ and $\underline u_i(t)$ converge to the same solution of \ref{eq:A}.
\end{theorem}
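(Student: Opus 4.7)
I would prove the three parts in order, with the discrete comparison principle for the cooperative ODE~\ref{eq:Adyn} as the common backbone.

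For (i), I differentiate~\ref{eq:Adyn} in time. Setting $w_i(t) := \frac{du_i(t)}{dt}$, the linearized system reads
\begin{equation*}
\frac{dw_i}{dt} = -\bigl(1 + \partial_1 A_{i+1/2} - \partial_2 A_{i-1/2}\bigr) w_i - \partial_2 A_{i+1/2}\, w_{i+1} + \partial_1 A_{i-1/2}\, w_{i-1},
\end{equation*}
with the partial derivatives of $A$ evaluated along $u(t)$. Hypothesis~\ref{A:monotone} makes the off-diagonal couplings $-\partial_2 A_{i+1/2}\geq 0$ and $\partial_1 A_{i-1/2}\geq 0$, so the linear time-dependent coefficient matrix is of Metzler (cooperative) type. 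For supersolution initial data $u(0)=\bar U$, plugging into~\ref{eq:Adyn} directly gives $w_i(0) = f_i - [\bar U_i + A_{i+1/2}(\bar U_i,\bar U_{i+1}) - A_{i-1/2}(\bar U_{i-1},\bar U_i)] \leq 0$ by definition of a supersolution. The comparison principle for Metzler systems---proved e.g.\ by differentiating $\min_i w_i(t)$ against the sign structure, or via nonnegativity of the transition matrix---then propagates $w_i(t)\leq 0$ to all $t\geq 0$. So $\bar u_i(t)$ is nonincreasing, and $w_i(t)\leq 0$ is exactly the assertion that $\bar u(t)$ remains a supersolution. The subsolution case is symmetric.

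For (ii), the task is the discrete maximum principle $\underline U\leq \bar U$. Setting $z_i := \underline U_i - \bar U_i$, I subtract the sub- and supersolution inequalities and apply the mean value theorem coordinatewise to $A$. At an interior index $i^\ast$ I obtain
\begin{equation*}
z_{i^\ast}(1 + a^+_1 - a^-_2) + a^+_2\, z_{i^\ast+1} - a^-_1\, z_{i^\ast-1} \leq 0,
\end{equation*}
where $a^\pm_1\geq 0$ and $a^\pm_2\leq 0$ are mean values of $\partial_{1,2}A_{i^\ast\pm 1/2}$ from~\ref{A:monotone}. Taking $i^\ast$ at a positive maximum of $z$, so that $z_{i^\ast}-z_{i^\ast\pm 1}\geq 0$, I rewrite this as
\begin{equation*}
z_{i^\ast}\bigl[1 + (a^+_1+a^+_2) - (a^-_1+a^-_2)\bigr] + (-a^+_2)(z_{i^\ast}-z_{i^\ast+1}) + a^-_1(z_{i^\ast}-z_{i^\ast-1}) \leq 0.
\end{equation*}
The last two summands are nonnegative, so the bracket times $z_{i^\ast}$ must be $\leq 0$. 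The key point---and the main obstacle---is that the ``$+u_i$'' in~\ref{eq:A} contributes the $1$ which dominates the residual $(a^+_1+a^+_2)-(a^-_1+a^-_2)$, producing $z_{i^\ast}\leq 0$ and thus contradicting $z_{i^\ast}>0$. The boundary indices $i\in\{1,I\}$ only simplify the argument since $A_{1/2}=A_{I+1/2}=0$ eliminates one neighbouring term.

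For (iii), I combine (i) and (ii). By (i), $\bar u_i(t)$ is nonincreasing; by (ii), it is bounded below by the (constant) subsolution $\underline U$. Hence $\bar u(t)$ has a pointwise limit $u^\ast$ as $t\to+\infty$. Passing to the limit in~\ref{eq:Adyn} and using continuity of $A$ forces $\frac{du^\ast}{dt}=0$, so $u^\ast$ solves~\ref{eq:A}. The symmetric argument gives a limit $u_\ast$ for $\underline u(t)$, itself a solution of~\ref{eq:A}. Since any solution of~\ref{eq:A} is simultaneously a sub- and a supersolution, applying (ii) both to $(u_\ast,u^\ast)$ and to $(u^\ast,u_\ast)$ yields $u_\ast\leq u^\ast$ and $u^\ast\leq u_\ast$, hence $u^\ast=u_\ast$. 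Both monotone trajectories therefore converge to this common, unique solution of~\ref{eq:A}.
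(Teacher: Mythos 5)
Your parts (i) and (iii) are sound and essentially coincide with the paper's argument: (i) is the same differentiation of~\ref{eq:Adyn}, producing a cooperative (Metzler) linear system for $z_i=\frac{du_i}{dt}$ whose sign is preserved from the initial data, and (iii) is the same monotone-and-bounded convergence argument, which you in fact spell out more completely than the paper (which disposes of (iii) in one sentence); your use of (ii) twice to identify the two limits is the right way to close that step.

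The gap is in (ii). Your pointwise maximum-principle argument requires the bracket $1+(a_1^++a_2^+)-(a_1^-+a_2^-)$ to be positive, and you assert that the $1$ contributed by the zeroth-order term dominates the rest. Nothing in~\ref{A:monotone} gives you that: the hypotheses fix only the \emph{signs} of $\partial_1 A$ and $\partial_2 A$, not their magnitudes, and the nonpositive contributions $a_2^+=\partial_2A_{i^\ast+1/2}\leq 0$ and $-a_1^-=-\partial_1A_{i^\ast-1/2}\leq 0$ are evaluated at different interfaces and different intermediate states than the nonnegative ones $a_1^+$ and $-a_2^-$, so no cancellation is available. In the intended application $A_{i+1/2}=\frac{\Delta t}{\Delta x}F_{i+1/2}$ and these derivatives scale like $\Delta t/(\Delta x)^2$; the whole point of the theorem is to avoid a CFL restriction, so the diagonal dominance you invoke fails precisely in the regime of interest. (Concretely, take $A_{i_0+1/2}(a,b)=-\lambda b$ with $\lambda$ large and all other interface fluxes zero: your bracket at $i_0$ equals $1-\lambda<0$ and the contradiction evaporates, even though the comparison principle itself remains true.) The paper avoids this by a global rather than pointwise argument: subtract the sub- and supersolution inequalities, multiply by $\mathrm{sgn}_+(w_i)$, sum over $i$, and Abel-sum so that each telescoped flux increment is paired with $\mathrm{sgn}_+(w_i)-\mathrm{sgn}_+(w_{i+1})$; the resulting terms $J_{i+1/2}$ and $K_{i+1/2}$ are nonnegative using only the sign conditions~\ref{A:monotone}, whence $\sum_i (w_i)_+\leq 0$. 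You need to replace your maximum-principle step by this (or an equivalent $L^1$/Kruzhkov-type) argument.
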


\begin{proof}
(i) We prove the statement with the supersolution. We set 
\begin{equation*}
z_i(t) = \f{d \bar u_i(t)}{dt}, \qquad z_i(0) \leq 0, \qquad i = 1, \ldots, I.
\end{equation*}
Since the $A_{i+\f12}$ are Lipschitz continuous, from equation \ref{eq:Adyn} we deduce that the quantities $\f{d \bar u_i(t)}{dt}$ are also Lipschitz continuous. From Rademacher's Theorem,  the $z_i$ are differentiable a.e. and their a.e. derivatives are also their distributional derivatives.

Differentiating equation~\ref{eq:Adyn}, we obtain for $ i =1,\ldots I$, and for a.e. $t>0$ 
\begin{equation*}
\f{dz_i(t)}{dt} + z_i (t)  + [\p_1 A_{i+\f12}   - \p_2 A_{i-\f12} ]\; z_{i} (t) = -\p_2 A_{i+\f12} \; z_{i+1} (t) +  \p_1 A_{i-\f12} \;  z_{i-1} (t). 
\end{equation*}
The solution cannot change sign and thus for $i =1,\ldots ,I$, $z_i(t) \leq 0$ for all times.
\\
(ii)
Consider $\underline u, \; \bar u$ sub (super) solutions. Set $w= \underline u- \bar u$ and we want to prove that $w\leq 0$. 

We write for $i =1,\ldots ,I$

\begin{equation*}
\begin{split}
w_i  &+ [A_{i+\f12} (\underline u_{i}, \underline u_{i+1} ) -   A_{i+\f12} (\bar u_{i}, \underline u_{i+1} ) ] +  [ A_{i+\f12} (\bar u_{i}, \underline u_{i+1} )-A_{i+\f12} (\bar u_{i}, \bar u_{i+1} )]
\\ &  - [A_{i-\f12} (\underline u_{i-1}, \underline u_{i} ) - A_{i-\f12} (\bar u_{i-1}, \underline u_{i}) ] - [ A_{i-\f12} (\bar u_{i-1}, \underline u_{i} )- A_{i-\f12} (\bar u_{i-1}, \bar u_{i} ) ] \leq 0.
\end{split}
\end{equation*} 
For $i =1,\ldots ,I$, we multiply by ${\rm sgn}_+(w_i):=1_{w_i >0}$ and add the relations to conclude that
\begin{equation*}
\sum_{i=1}^{I} (w_i)_+  + \sum_{i=1}^{I-1} J_{i+\f12} +  \sum_{i=1}^{I-1} K_{i+\f12} = 0, 
\end{equation*}
with  
\begin{align*}
J_{i+\f12} &=  [A_{i+\f12} (\underline u_{i}, \underline u_{i+1} ) -   A_{i+\f12} (\bar u_{i}, \underline u_{i+1} ) ] \; [ {\rm sgn}_+(w_i) - {\rm sgn}_+(w_{i+1})], \\
K_{i+\f12} &=  [ A_{i+\f12} (\bar u_{i}, \underline u_{i+1} )-A_{i+\f12} (\bar u_{i}, \bar u_{i+1} )] \; [ {\rm sgn}_+(w_i) - {\rm sgn}_+(w_{i+1})] .
\end{align*}
For each of the these terms, we show that $J_{i+\f12}  \geq 0$, $K_{i+\f12}  \geq 0$, as follows. Only the case when the + signs in the right brackets are different has to be considered. Assume for instance that 
\begin{equation*}
\underline u_i \geq \bar u_i, \qquad \text{and} \quad  \underline u_{i+1} \leq \bar u_{i+1}.
\end{equation*}
Then, we have by assumption \ref{A:monotone}, 
\begin{align*}
[A_{i+\f12}  &(\underline u_{i}, \underline u_{i+1} ) -   A_{i+\f12} (\bar u_{i}, \underline u_{i+1} ) ] \geq 0 \;  \Rightarrow J_{i+\f12} \;  \geq 0,\\
[ A_{i+\f12} &(\bar u_{i}, \underline u_{i+1} )-A_{i+\f12} (\bar u_{i}, \bar u_{i+1} )] \geq 0 \;  \Rightarrow K_{i+\f12} \;  \geq 0.
\end{align*}
Therefore $\sum_{i=1}^{I} (w_i)_+ \leq 0$ and this implies $w_i \leq 0$ for all $i$. From (i) and (ii) we infer that the subsolution increases and is bounded from above by the supersolution, so that it exists for all times. Similarly, the supersolution exists for all times and thus, so does the solution $u_i(t)$ to~\ref{eq:Adyn}, so that we can speak of its convergence.
\\
(iii) This is clear since the limits are solutions.~\end{proof}

{
	\bibliography{bibliography}
	\bibliographystyle{acm}}

\medskip
Received xxxx 20xx; revised xxxx 20xx.
\medskip

\end{document}